\newtheorem{thm}{Theorem}[section]
\newtheorem{cor}[thm]{Corollary}
\newtheorem{prop}[thm]{Proposition}
\newtheorem{defn}[thm]{Definition}
\newtheorem{rem}[thm]{Remark}
\numberwithin{equation}{section}
\newcommand{\nt}{\noindent}
\newcommand{\st}{\subset}
\newcommand{\R}{\mathbb{R}}
\newcommand{\N}{\mathbb{N}}
\newcommand{\K}{\mathbb{K}}
\newcommand{\C}{\mathbb{C}}
\newcommand{\Ccal}{\mathcal{C}}
\newcommand{\Kcal}{\mathcal{K}}
\newcommand{\Acal}{\mathcal{A}}
\newcommand{\Dcal}{\mathcal{D}}
\newcommand{\Scal}{\mathcal{S}}
\newcommand{\Ecal}{\mathcal{E}}
\newcommand{\Lcal}{\mathcal{L}}
\newcommand{\Hcal}{\mathcal{H}}
\newcommand{\Pcal}{\mathcal{P}}
\newcommand{\Wcal}{\mathcal{W}}
\newcommand{\Ucal}{\mathcal{U}}
\newcommand{\Rb}{\overline{\R}}
\newcommand{\dbar}{{d\hspace{-0,05cm}\bar{}\hspace{0,05cm}}}
\newcommand{\Op}{\textup{Op}}
\newcommand{\cl}{\textup{(cl)}}
\newcommand{\clw}{\textup{cl}}
\newcommand{\loc}{\textup{loc}}
\newcommand{\cone}{\textup{cone}}
\newcommand{\comp}{\textup{comp}}
\newcommand{\y}{\infty}
\newcommand{\la}{\langle}
\newcommand{\ra}{\rangle}
\newcommand{\imb}{\textup{Im}}
\newcommand{\reb}{\textup{Re}}
\newcommand{\ur}{\textup{r}}
\newcommand{\id}{\textup{id}}
\newcommand{\beq}{\begin{equation}}
\newcommand{\eeq}{\end{equation}}
\begin{document}

\title{The singular functions of branching edge asymptotics}
\author{B.-W. Schulze\footnote{B.-Wolfgang Schulze, Institute of Mathematics, University of Potsdam, 14469, Potsdam, Germany; e-mail: schulze@math.uni-potsdam.de}, L. Tepoyan\footnote{L. Tepoyan, Yerevan State University, Department of Mathematics and Mechanics, A. Manoogian str. 1,0025 Yerevan, Armenia; e-mail: tepoyan@yahoo.com}}

\pagestyle{headings}
\maketitle
\date{}




\centerline{\thanks{University of Potsdam,  supported by DFG GZ : SCHU 808/22-1}}
\begin{abstract}
We investigate the structure of branching asymptotics appearing in solutions to elliptic edge problems. The exponents in powers of the half-axis variable, logarithmic terms, and coefficients depend on the variables on the edge and may be branching.
\end{abstract}


\tableofcontents
\newpage
\section*{Introduction}
\addcontentsline{toc}{section}{Introduction}
      \markboth{INTRODUCTION}{INTRODUCTION}


The solutions to elliptic problems on a manifold with edge are expected to have asymptotics of the form
\beq\label{as}
u(r,x,y) \sim  \sum_{j=0}^J\sum_{k=0}^{m_j}
c_{jk}(x,y)r^{-p_j}\textup{log}^k r
\eeq
as $r\to 0$, with exponents $p_j\in\C$, and $m_j\in\N$ ($=\{0,1,2,\ldots\}$). Here $(r,x,y)$ are the variables in an open stretched wedge ${\R}_+\times X\times\Omega$ for a closed smooth manifold $X$ of dimension $n$ and an open set $\Omega\subseteq {\R}^q$. If the respective operator is a differential operator of the form
\beq\label{AO}
 A=r^{-\mu
}\sum_{j+|\alpha|\leq\mu} a_{j\alpha}(r,y)(-r\partial
_r)^j(rD_y)^\alpha
\eeq
for coefficients  $a_{j\alpha}(r,y)\in C^{\y} \big(\overline{\R}_+\times\Omega ,\textup{Diff}^{\,\mu-(j+|\alpha|)}(X)\big)$ (with  $\textup{Diff}^{\,\nu}(X)$ being the space of differential operators of order $\nu$ on $X$) then the asymptotic data
\beq\label{data}
\Pcal :=\{(p_j,m_j)\}_{0\le j\le J}\subset\C\times\N,
 \eeq
$J=J(\Pcal)\in\N\cup\{{\y}\}$ are known to be determined by the leading conormal symbol
\beq\label{cono}
\sigma_{\textup{c}}(A)(y,z) :=\sum_{j=0}^\mu
a_{j0}(0,y)z^j,
\eeq
regarded as a family of differential operators
\beq\label{cont}
\sigma_{\textup{c}}(A)(y,z):H^s(X)\to
H^{s-\mu}(X)
\eeq
 of order $\mu$, smooth in $y\in\Omega$ and holomorphic in $z$. In the elliptic case it is known that the operators \eqref{cont} are parameter-dependent elliptic of order $\mu$ where the parameter is $\imb\,z$ with $z$ varying on a so-called weight line
  \beq\label{Gb}
 {\Gamma}_{\beta}:=\{z\in \C : \reb \,z =\beta\}
 \eeq
for every real $\beta$.

It is well-known that for any fixed $y\in\Omega$ the operators \eqref{cont} are bijective for all $z$ off some discrete set $D(y)\subset\C$, where $D(y)\cap\{c<\reb\,z<c'\}$ is finite for every $c\le c'$, cf. Bleher \cite{Bleh1}. Those non-bijectivity points are just responsible for the exponents $-p_j$ in \eqref{as}. More precisely, ${\sigma}_{\textup{c}}^{-1}(A)(y,z)$ is an $L_{\clw}^{-\mu}(X)$-valued meromorphic function with poles at the points $p_j$ of (finite) multiplicities $m_j+1$ and finite rank Laurent coefficients in $L^{-\y}(X)$ at the powers $(z-p_j)^{-(k+1)}, 0\le k\le m_j$. Here $L_{\clw}^{\nu}(X), \nu\in\R$, means the space of all classical pseudo-differential operators on $X$ of order $\nu$, and $L^{-\y}(X) :=\bigcap_{\nu\in\R}L^{\nu}_{\clw}(X)$ is the space of smoothing operators.

If $\sigma_{\textup{c}}(A)$ is independent of $y$ we have constant discrete edge asymptotics of solutions, cf. the terminology below. Even in this case it is interesting to observe the nature of coefficients $c_{jk}$ in \eqref{as} depending on the considered Sobolev smoothness $s\in\R$ of the solutions. The Sobolev smoothness of the coefficients $c_{jk}$ in $y$ also depends on $\reb\,p_j$.
Clearly in general the leading conormal symbol $\sigma_{\textup{c}}(A)$ depends on $y$ and then also the set $D(y)$. In this case the inverse $\sigma_{\textup{c}}^{-1}(A)$ is a $y$-dependent family of meromorphic operator functions with poles $p_j(y)$ varying in the complex plane and possible branchings where the multiplicities $m_j(y)+1$ may have jumps, including the above-mentioned Laurent coefficients. These effects have been studied in a number of papers, cf. \cite{Schu34}, \cite{Schu36} and \cite{Schu67}. In particular, also the Sobolev smoothness in $y$ of the coefficients $c_{jk}(x,y)$ is branching. The program is going on, and in the present article we study some features of the functional analytic structure of singular functions in the variable branching case which are not yet analyzed so far.

The characterization of asymptotics of solutions to singular PDE-problems is a central issue of solvability theory of elliptic equations on a singular configuration. One of the classical papers in this connection is \cite{Kond1} of Kondratyev on boundary value problems on manifolds with conical singularities. Since then there appeared numerous investigations in this field, also on boundary problems for operators without the transmission property, or mixed and transmission problems, see, in particular, Eskin's book \cite{Eski2}. The present investigation is dominated by the pseudo-differential approach to generate asymptotics via parametrices and elliptic regularity, see, in particular, the monographs \cite{Schu2}, \cite{Egor1}, \cite{Haru13}, and the references there. Note that a similar philosophy applies also for corner singularities where asymptotics appear in iterated form, cf. \cite{Schu27}, or,  the recent investigations, \cite{Haba1}, \cite{Schu74}.

This paper is organized as follows.

First in Section 1 we outline some necessary tools on constant discrete edge asymptotics in the frame of weighted edge spaces and corresponding subspaces. We then pass to a more detailed investigation of the singular functions and show some essential simplification compared with other expositions, say, \cite{Egor1} or \cite{Schu20}, namely, that the cut-off functions may be chosen independently of the edge covariable $\eta$, modulo edge-flat remainders. We do that including the so-called continuous asymptotics.

In Section 2 we consider  variable branching edge asymptotics, formulated in terms of smooth functions with values in analytic functionals that are pointwise discrete and of finite order. Basics and tools can be found in \cite{Schu2}, \cite{Kapa10}; the notion itself has been first established in \cite{Schu34}, \cite{Schu36} and further studied in detail in \cite{Schu67}. Here we show a refinement of a result of \cite{Schu20} on the representation of singular functions with variable continuous asymptotics by analytic functionals without explicit dependence on the edge variable $y$. In particular, the preparations from Section 1 on  $\eta$-independent cut-off functions  allow us to find the claimed  new representation in a unique way. We finally apply this result to the case of variable branching asymptotics and obtain the surprising effect that the pointwise discrete behaviour in $y$ may be shifted into a new functional that gives rise to a  localization of Sobolev smoothness of ``coefficients of asymptotics'', both in variable branching as well as in continuous asymptotics.

\section{The constant discrete edge asymptotics} 

\subsection{Edge spaces and specific operator-valued symbols} 

Let us first recall what we understand by abstract edge spaces modelled on a space with group action.

First if this space is a Hilbert space $H$ such a group action is a family $\kappa=\{\kappa_{\lambda}\}_{\lambda\in{\R}_+}$ of isomorphisms $\kappa_{\lambda}:H\to H$ with $\kappa_{\lambda}\kappa_{{\lambda}'}=\kappa_{\lambda{\lambda}'}$ for all $\lambda, {\lambda}'\in\R_+$, and $\lambda\to \kappa_{\lambda}h$ represents a function in $C({\R}_+,H)$ for every $h\in H$. As is known we have an estimate
\beq\label{estk}
\|\kappa_{\lambda}\|_{\Lcal(H)}\le c\big(\max\{\lambda, {\lambda}^{-1}\}\big)^M
\eeq
for all $\lambda\in{\R}_+$, for some constants $c>0, M>0$, depending on $\kappa$ (a proof may be found in \cite{Hirs2}).
We also need the case of a Fr\'{e}chet space $E$ written as a projective limit
$\underset{\overleftarrow{j\in\N}}{\lim} E^j$  of Hilbert spaces, with continuous embeddings $E^j\hookrightarrow E^0$ for all $j$, where $E^0$ is endowed with a group action $\kappa$ and $\kappa|_{E^j}$ defines a group action in $E^j$ for every $j$. The constants $c$ and $M$ in \eqref{estk} then may depend on $j$.
Now $\Wcal^s({\R}^q,H), s\in\R$, is defined to be the completion of $\Scal({\R}^q,H)$ with respect to the norm
\beq\label{N}
\|u\|_{{\Wcal}^s({\R}^q,H)}=\left\{\int\la\eta\ra^{2s}\|{\kappa}^{-1}_{\la\eta\ra}\hat{u}(\eta)\|^2_H\,d\eta\right\}^{1/2}
\eeq
with $\hat{u}(\eta)=(F_{y\to\eta}u)(\eta)$ being the Fourier transform, ${\la\eta\ra}=(1+|\eta|^2)^{1/2}$. For a Fr\'{e}chet space
$E=\underset{\overleftarrow{j\in\N}}{\lim} E^j$
we have ${\Wcal}^s({\R}^q,E^j)$, $j\in\N$, and we set
$$
{\Wcal}^s({\R}^q,E)=\lim_{\overleftarrow{j\in\N}} {\Wcal}^s({\R}^q,E^j).
$$
Recall that we obtain an equivalent norm to \eqref{N} when we replace $\la\eta\ra$ by a function $\eta\to[\eta]$, strictly positive, smooth, with $[\eta]=|\eta|$ for $|\eta|>C$ for some $C>0$.

In the general discussion we often consider the Hilbert space case; the generalization to Fr\'{e}chet spaces will be obvious. Observe that ${\Wcal}^s({\R}^q,H)\subset{\Scal}'({\R}^q,H)$. For an open set $\Omega\subseteq{\R}^q$ by ${\Wcal}^s_{\loc}(\Omega,H)$ we denote the space of all $u\in{\Dcal}'(\Omega,H)$ such that $\varphi u\in{\Wcal}^s({\R}^q,H)$ for every $\varphi\in C_0^{\y}(\Omega)$. Moreover, ${\Wcal}^s_{\comp}(\Omega,H)$ denotes the subspace of all elements of ${\Wcal}^s({\R}^q,H)$ that have compact support in $\Omega$.
Clearly the spaces $\Wcal^s({\R}^q,H)$ depend on the choice of $\kappa$. If necessary we write $\Wcal^s({\R}^q,H)_{\kappa}$ in order to indicate the specific group action $\kappa$. The case $\kappa=\id$ for all $\lambda\in{\R}_+$ is always admitted. Then we have
$$
\Wcal^s({\R}^q,H)_{\id}=H^s({\R}^q,H)
$$
which is the standard Sobolev space of $H$-valued distributions.
Observe that
\beq\label{cap}
{\bigcap}_{s\in\R}{\Wcal}^s({\R}^q,H)_{\kappa} :={\Wcal}^{\y}({\R}^q,H)_{\kappa}={\Wcal}^{\y}({\R}^q,H)_{\id},
\eeq
i.e., the dependence on $\kappa$ disappears when $s=\y$. This is a consequence of \eqref{estk}.
From the definition we have an isomorphism
\beq\label{K}
\K=:F^{-1}\kappa_{[\eta]}F :\Wcal^s({\R}^q,H)_{\id}\to \Wcal^s({\R}^q,H)_{\kappa}
\eeq
for every $s\in\R$, in particular,
$$
\K: \Wcal^{\y}({\R}^q,H)_{\id}\to \Wcal^{\y}({\R}^q,H)_{\id}.
$$
We employ spaces $\Wcal^s({\R}^q,H)$ for certain Hilbert spaces $H$ based on the Mellin transform.

 The analysis on a singular manifold refers to a large extent to the
 Mellin transform
 $$
 Mu(z)=\int_0^{\y}r^{z-1}u(r)\,dr
 $$
 first for $u\in C_0^{\y}({\R}_+)$ and then extended to various
 distribution spaces, also vector-valued ones. For $u\in
 C_0^{\y}({\R}_+)$ we obtain an entire function in the complex
 $z$-plane. Function/distribution spaces on
 ${\Gamma}_{\beta}$ always refer to $\rho=\imb \,z$ for
 $z\in{\Gamma}_{\beta},$ e.g. the Schwartz space ${\mathcal
 S}({\Gamma}_{\beta})$ or $L^2({\Gamma}_{\beta})$ with respect to
 the Lebesgue measure on ${\R}_{\rho}$. Recall that the Mellin transform induces a
 continuous operator $M:C_0^{\y}({\R}_+)\to {\Acal}(\C)$ with ${\Acal}(\C)$ being the space of entire functions
 in $z$. In particular, for $u\in
 C_0^{\y}({\R}_+)$ we can form the weighted Mellin transform  $M_{\gamma}:C_0^{\y}({\R}_+)\to {\Scal
 }({\Gamma}_{1/2-\gamma})$ of
 weight $\gamma\in\R$,  defined as
 $M_{\gamma}u:=Mu|_{{\Gamma}_{1/2-\gamma}}$. As is well-known, $M_{\gamma}$
 extends to an isomorphism
 $M_{\gamma}:r^{\gamma}L^2({\R}_+)\to L^2({\Gamma}_{1/2-\gamma}),$ and then
 $$
(M_{\gamma}^{-1}g)(r)=\int_{{\Gamma}_{1/2-\gamma}}r^{-z}g(z)\,\dbar
z
 $$
 for $\dbar z=(2\pi i)^{-1}dz.$ Analogously as standard Sobolev
 spaces based on $L^2$-norms and the Fourier transform we can form
 weighted Mellin Sobolev spaces
 ${\Hcal}^{s,\gamma}({\R}_+\times{\R}^n)$ as the completion of
 $C_0^{\y}({\R}_+\times{\R}^n)$ with respect to the norm
 $$
 \|u\|_{{\Hcal}^{s,\gamma}({\R}_+\times{\R}^n)}=\left\{\int_{{\Gamma}_{(n+1)/2-\gamma}}\int_{{\R}^n}{\la
 z,\xi\ra}^{2s}\left|(M_{\gamma-n/2, r\to z}F_{x\to \xi}u)(z,\xi)\right|^2\,\dbar zd\xi \right\}^{1/2},
 $$
 with $F=F_{x\to \xi}$ being the Fourier transform in ${\R}^n$.
Moreover, if $X$ is a smooth closed manifold of dimension $n$ we have analogous
spaces
${\Hcal}^{s,\gamma}(X^{\wedge})$ for
$$
X^{\wedge}:={\R}_+\times X
$$
based on the local spaces ${\Hcal}^{s,\gamma}({\R}_+\times{\R}^n)$
and defined with the help of charts and a partition of unity on $X$. Note that (in our notation)  the meaning of $\gamma$ depends on the dimension $n$. In the case $s=\y$ we have a canonical identification
\beq\label{inf}
{\Hcal}^{\y,\gamma}(X^{\wedge})={\Hcal}^{\y,\gamma-n/2}({\R}_+){\hat{\otimes}}_{\pi}C^{\y}(X)\cong C^{\y}\big(X,{\Hcal}^{\y,\gamma-n/2}({\R}_+)\big);
\eeq
here ${\hat{\otimes}}_{\pi}$ means the projective tensor product between the respective spaces.

In this exposition a cut-off function $\omega$ on the half-axis is
any $\omega\in C_0^{\y}({\Rb}_{+})$ that is equal to $1$ close to
$0$. It will be essential also to employ the spaces
\beq\label{Ks}
{\Kcal}^{s,\gamma}(X^{\wedge}):=\{\omega u+(1-\omega)v:u\in
{\Hcal}^{s,\gamma}(X^{\wedge}), v\in H^s_{\cone}(X^{\wedge})\}.
\eeq
Here $H^s_{\cone}(X^{\wedge})$ is defined as follows. Choose
any diffeomorphism ${\chi}_1:U\to V$ from a coordinate
neighbourhood $U$ on $X$ to an open set $V\subset S^n$ (the unit
sphere in ${\R}^{n+1}_{\tilde{x}}$), and let $\chi:{\R}_{+}\times
U\to \Gamma:=\{\tilde{x}\in{\R}^{n+1}\setminus \{0\}:
{\tilde{x}}/|{\tilde{x}}|\in V\}$ be defined by
$\chi(r,x):=r{\chi}_1(x), r\in{\R}_{+}$. Then
$H^s_{\cone}(X^{\wedge})$ is the set of all $v\in
H^s_{\loc}(\R\times X)|_{{\R}_{+}\times X}$ such that for any
$\varphi\in C_0^{\y}(U)$ we have $((1-\omega)\varphi v)\circ
{\chi}^{-1}\in H^s({\R}^{n+1})$, for every coordinate
neighbourhood $U$ on $X$. Concerning more details on those spaces,
cf.~\cite{Schu20} or \cite{Schu27}. In particular,
${\Hcal}^{s,\gamma}(X^{\wedge})$ and
${\Kcal}^{s,\gamma}(X^{\wedge})$ are Hilbert spaces in suitable
scalar products, and we have
${\Hcal}^{0,0}(X^{\wedge})={\Kcal}^{0,0}(X^{\wedge})=r^{-n/2}L^2({\R}_+\times
X)$ with $L^2$ referring to $drdx$ and $dx$  associated
with a fixed Riemannian metric on $X$,  $n=\dim X$. Analogously as \eqref{inf} we also have
\beq\label{infn}
{\Kcal}^{\y,\gamma}(X^{\wedge})={\Kcal}^{\y,\gamma-n/2}({\R}_+)\hat{\otimes}_{\pi}C^{\y}(X)=
C^{\y}\big(X,{\Kcal}^{\y,\gamma-n/2}({\R}_+)\big).
\eeq
Here ${\Kcal}^{\y,\gamma-n/2}({\R}_+)$ is endowed with its natural Fr\'{e}chet topology.
In
order to formulate asymptotics of elements in
${\Kcal}^{s,\gamma}(X^{\wedge})$ we first fix so-called weight
data $(\gamma,\Theta)$ for $\gamma\in\R$ and
$\Theta=(\vartheta,0],-\y\le\vartheta<0.$ Define the Fr\'{e}chet
space
$$
{\Kcal}^{s,\gamma}_{\Theta}(X^{\wedge})=\lim_{\overleftarrow{k\in\N}}
{\Kcal}^{s,\gamma-\vartheta-(1+k)^{-1}}(X^{\wedge})
$$
of elements of flatness $\Theta$ relative to $\gamma$. For purposes below we also introduce the spaces ${\Kcal}^{s,\gamma;e}(X^{\wedge}):= \la r\ra^{-e}{\Kcal}^{s,\gamma}(X^{\wedge})$, ${\Kcal}^{s,\gamma;e}_{\Theta}(X^{\wedge}):= \la r\ra^{-e}{\Kcal}^{s,\gamma}_{\Theta}(X^{\wedge})$ for any $s, \gamma, e\in\R$. In order to define subspaces with asymptotics we consider a sequence
\beq\label{pair}
\mathcal P=\{(p_j,m_j)\}_{j=0,1,\ldots, J}\subset\C\times\N
\eeq
for a
$J=J(\Pcal)\in\N\cup\{\y\}$ such that
$(n+1)/2-\gamma+\vartheta<\reb\,p_j<(n+1)/2-\gamma$ for all $0\le
j\le J$, $J(\Pcal)<\y$ for $\vartheta>-\y$. In the case $\vartheta=-\y$ and $J=\y$ we assume $\reb\,p_j\to -\y$ as $j\to\y$. Such a $\Pcal$ will be called a discrete asymptotic type
associated with $(\gamma,\Theta)$. We set
${\pi}_{\C}\Pcal:=\{p_j\}_{j=0,1,\ldots, J}$. Observe that for any
$p\in\C, \reb\,p<(n+1)/2-\gamma$, and $c\in C^{\y}(X)$ we have
$\omega(r)c(x)r^{-p}{\textup{log}}^kr\in
{\Kcal}^{\y,\gamma}(X^{\wedge})$ for $k\in\N$ and any cut-off
function $\omega$. Given a discrete asymptotic type $\Pcal$ for finite $\Theta$ we form the
space
\beq\label{sing2}
{\Ecal}_{\Pcal}:=\{
\omega(r)\sum_{j=0}^J\sum_{k=0}^{m_j}c_{jk}r^{-p_{j}}{\textup{log}}^k r
:  c_{jk}\in C^{\y}(X) \},
\eeq
for some fixed cut-off function $\omega$. This space is Fr\'{e}chet
in a natural way (in fact, isomorphic to a corresponding direct sum
 of finitely many copies of $C^{\y}(X)$), and we have
 ${\Kcal}^{s,\gamma}_{\Theta}(X^{\wedge})\cap{\Ecal}_{\Pcal}=\{0\}.$
 Then the direct sum
 \beq\label{sub}
 {\Kcal}^{s,\gamma}_{\Pcal}(X^{\wedge}):={\Kcal}^{s,\gamma}_{\Theta}(X^{\wedge})+{\Ecal}_{\Pcal}
 \eeq
 is again a Fr\'{e}chet space. The spaces \eqref{sub} are examples of subspaces
 of ${\Kcal}^{s,\gamma}(X^{\wedge})$ with discrete
 asymptotics of type $\Pcal$. The definition can be easily extended to
 asymptotic types $\Pcal=\{(p_j,m_j)\}_{j=0,1,\ldots, J}$
 associated with $(\gamma, (-\y, 0])$ and $J\in\N\cup\{\y\}$.  In this case we form ${\Pcal}_k:=\{(p,m)\in\Pcal: \reb \,p>(n+1)/2-\gamma-(k+1)\},
 k\in\N$; then ${\Pcal}_k$ is finite and associated with $\big(\gamma,
 (-(k+1),0]\big)$. Thus we have the spaces
 ${\Kcal}^{s,\gamma}_{{\Pcal}_k}(X^{\wedge})$ and we set
 $$
{\Kcal}^{s,\gamma}_{\Pcal}(X^{\wedge}):=\lim_{\overleftarrow{k\in\N}}{\Kcal}^{s,\gamma}_{{\Pcal}_k}(X^{\wedge}).
$$
Another technical tool that we employ later on are operator-valued symbols based on twisted symbolic estimates. Let $H$ and $\tilde{H}$ be Hilbert spaces with group actions $\kappa$ and $\tilde{\kappa}$, respectively.

By $S^{\mu}(\Omega\times{\R}^q;H,\tilde{H})$  for an open set
$\Omega\subseteq {\R}^p$ we denote the set of all $a(y,\eta)\in
C^{\y}\big(\Omega\times{\R}^q, {\Lcal (H, \tilde{H})}\big)$ such that
\beq\label{ests}
\|\tilde{\kappa}_{[\eta]}^{-1}\{D_y^{\alpha}D_{\eta}^{\beta}a(y,
\eta)\}{\kappa}_{[\eta]}\|_{\Lcal (H, \tilde{H})}\le
c[\eta]^{\mu-|\beta|}
\eeq
for all $(y, \eta)\in K\times{\R}^q, K\Subset \Omega,$ and
$\alpha\in {\N}^p, \beta\in {\N}^q,$ for constants $c=c(\alpha,
\beta, K)>0.$ Such $a$ are called (operator-valued) symbols of order
$\mu$. For instance, if $a(y, \eta)$ is homogeneous of order $\mu$
for large $|\eta|$ then it is such a symbol. By
$S^{\mu}_{\clw}(\Omega\times{\R}^q; H, \tilde{H})$ we denote the
subspace of classical symbols, i.e., the set of those $a(y, \eta)\in
S^{\mu}(\Omega\times{\R}^q; H, \tilde{H})$ with an asymptotic
expansion into symbols that are homogeneous of order $\mu-j,
j\in\N$, for large $|\eta|$. Let $S^{(\mu)}\big(\Omega\times({\R}^q\setminus\{0\});H,\tilde{H}\big)$ be the space of those $a_{(\mu)}(y,\eta)\in C^{\y}\big(\Omega\times({\R}^q\setminus\{0\}),\Lcal(H,\tilde{H})\big)$ such that $a_{(\mu)}(y,\lambda\eta)={\lambda}^{\mu}{\tilde{\kappa}}_{\lambda}a(y,\eta){\kappa}_{\lambda}^{-1}$ for all $\lambda\in{\R}_+$.  Every $a(y, \eta)\in
S^{\mu}(\Omega\times{\R}^q; H, \tilde{H})$ has a principal symbol of
order $\mu$, i.e., the unique $a_{(\mu)}(y, \eta)\in
S^{(\mu)}\big(\Omega\times({\R}^q\setminus\{0\}); H, \tilde{H}\big)$ such that
$$
a(y, \eta)-\chi(\eta)a_{(\mu)}(y, \eta)\in
S^{\mu-1}_{\clw}(\Omega\times{\R}^q; H, \tilde{H})
$$
for any fixed excision function $\chi$.

If a consideration is valid in the classical as well as the general case we write as subscript $(\clw)$. If necessary we also write $S^{\mu}_{\cl}(\Omega\times{\R}^q; H,
\tilde{H})_{\kappa, \tilde{\kappa}}$ for the respective spaces of
symbols. The spaces of symbols with constant coefficients will be denoted by
$S^{\mu}_{\cl}({\R}^q; H, \tilde{H})$. The spaces
$S^{\mu}_{\cl}(\Omega\times{\R}^q; H, \tilde{H})$ are Fr\'{e}chet in
a natural way, $S^{\mu}_{\cl}({\R}^q; H, \tilde{H})$ are closed
subspaces, and we have
$$
S^{\mu}_{\cl}(\Omega\times{\R}^q; H,\tilde{H})=C^{\y}(\Omega,
S^{\mu}_{\cl}\big({\R}^q; H, \tilde{H})\big).
$$
In the case $p=2q$ and $\Omega\times\Omega$ for
$\Omega\subseteq {\R}^q$ instead of $\Omega\subseteq{\R}^p$ we also
write $(y,y')$ rather than $y$.

For every $a(y,y',\eta)\in S^{\mu}(\Omega\times\Omega\times{\R}^q;
H, \tilde{H})$ the operator $\textup{Op}(a):C_0^{\y}(\Omega, H)\to
C^{\y}(\Omega, \tilde{H})$, defined by
\beq\label{Osc}
{\Op}_y(a)u(y):=\int\!\!\!\int e^{i(y-y')\eta}a(y,y',\eta)u(y')\,dy'\dbar\eta,
\eeq
 extends to a continuous map
\beq\label{co}
\textup{Op}(a):{\Wcal}^s_{\comp}(\Omega, H)\to
{\Wcal}^{s-\mu}_{\textup{loc}}(\Omega, \tilde{H})
\eeq
for any $s\in\R$. The continuity \eqref{co} has been established in \cite[page 283]{Schu2} for all spaces $H,\tilde{H}$ that are of interest here. The case of general $H,\tilde{H}$ with group action was given in \cite{Seil1}. In the special case of $a(\eta)\in S^{\mu}({\R}^q; H, \tilde{H})$ the operator $\Op(a)$ induces a continuous operator
\beq\label{cont1}
\Op(a):{\Wcal}^s({\R}^q,H)\to {\Wcal}^{s-\mu}({\R}^q,\tilde{H})
\eeq
for any $s\in\R$. Here
\beq\label{NO}
\|\Op(a)\|_{{\Lcal}({\Wcal}^s({\R}^q,H),{\Wcal}^{s-\mu}({\R}^q,\tilde{H}))}\le \underset{\eta\in{\R}^q}{\sup} [\eta]^{-\mu}\|{\tilde{\kappa}}^{-1}_{[\eta]}a(\eta){\kappa}_{[\eta]}\|_{{\Lcal}(H,\tilde{H})}.
\eeq
\begin{rem}\label{S}
Observe that \eqref{cont1} already holds for $a(\eta)\in C^{\y}\big({\R}^q,{\Lcal}(H,\tilde{H})\big)$ when the $0$-th symbolic
estimate \eqref{ests} holds, namely,
$$
\|{\tilde{\kappa}}^{-1}_{[\eta]}a(\eta){\kappa}_{[\eta]}\|_{{\Lcal}(H, \tilde{H})}\le c[\eta]^{\mu}
$$
for all $\eta\in{{\R}^q}$, for some $c>0$.
\end{rem}
We will employ below a slight modification of such a construction. Let us start, in particular, with the case $H=\C$ with the trivial group action. Symbols in $S^{\mu}_{\cl}(\Omega\times{\R}^q; \C, \tilde{H})$ are also referred to as potential symbols. Consider, for instance, the case of symbols $a(\eta)$ with constant coefficiens, i.e., without $y$-dependence. Such symbols are realized as multiplications of $c\in\C$ by an element $f(\eta)\in\tilde{H}$. The symbolic estimates have the form
\beq\label{est1}
\|{\kappa}^{-1}_{[\eta]}D_{\eta}^{\beta}f(\eta)\|_{{\Lcal}(\C,\tilde{H})}=\|{\kappa}^{-1}_{[\eta]}D_{\eta}^{\beta}f(\eta)\|_{\tilde{H}}\le
 C[\eta]^{\mu-|\beta|}.
\eeq
In our applications we have the situation that for a Fr\'{e}chet space $E=\underset{\overleftarrow{j\in\N}}{\lim} E^j$ for Hilbert spaces $E^j$ and the trivial group action $\id$ on all $E^j$ we encounter $E$ and tensor products  $\tilde{H}\hat{\otimes}_{\pi}E$ rather than $\C$ and  $\tilde{H}$. In our case $E$ will be nuclear, and then we have
$$
\tilde{H}\hat{\otimes}_{\pi}E=\lim_{\overleftarrow{k\in\N}}\tilde{H}{\otimes}_{H}E^j
$$
with ${\otimes}_{H}$ being the Hilbert tensor product. Such things are well-known, but details may be found, e.g., in
\cite[page 38]{Remp3}.
From $f(\eta)\in S^{\mu}_{\cl}({\R}^q; \C, \tilde{H})$ we pass to the operator function $f(\eta)\otimes{\id}_E$. This can be interpreted as a symbol
$$
f\otimes{\id}_E\in S^{\mu}_{\cl}({\R}^q; E, \tilde{H}\hat{\otimes}_{\pi}E)=\lim_{\overleftarrow{j\in\N}}S^{\mu}_{\cl}({\R}^q; E_j, \tilde{H}{\otimes}_{H}E^j).
$$
In fact, instead of \eqref{est1}  we have the symbolic estimates
\begin{equation*}
\begin{split}
\|({\kappa}^{-1}_{[\eta]}\otimes{\id}_{E^j})D_{\eta}^{\beta}\big(f(\eta)\otimes{\id}_{E^j} \big)&\|_{{\Lcal}(E^j,\tilde{H}{\otimes}_H E^j)}=\|{\kappa}^{-1}_{[\eta]}D_{\eta}^{\beta}f(\eta)\|_{\tilde{H}}\|{\id}_{E^j}\|_{{\Lcal}(E^j,E^j)}\\
=&\|{\kappa}^{-1}_{[\eta]}D_{\eta}^{\beta}f(\eta)\|_{\tilde{H}}\le C[\eta]^{\mu-|\beta|}
\end{split}
\end{equation*}
for every $j$. Similarly as \eqref{cont1} we obtain continuous operators
\beq\label{cont3}
{\Op}_y(f\otimes {\id}_{E^j}):H^s({\R}^q,E^j)\to{\Wcal}^{s-\mu}({\R}^q,\tilde{H}{\otimes}_{H}E^j).
\eeq
The space on the right refers to the group action $\kappa_{\lambda}\otimes{\id}_E$, such that
$$
\|u\|_{{\Wcal}^t({\R}^q,\tilde{H}{\otimes}_HE^j)}=
\left\{\int[\eta]^{2t}\left\|({\kappa}^{-1}_{[\eta]}\otimes{\id}_{E^j})\hat{u}(\eta)\right\|_{\tilde{H}{\otimes}_HE^j}^2\,d\eta\right\}^{1/2}
$$
for every $j$. We have
$$
{\Wcal}^t({\R}^q,\tilde{H}\hat{{\otimes}}_{\pi}E)=\lim_{\overleftarrow{j\in\N}}{\Wcal}^t({\R}^q,\tilde{H}{\otimes}_{H}E^j),
$$
$t\in\R$, and it follows altogether
\beq\label{cont2}
{\Op}_y(f\otimes {\id}_E):H^s({\R}^q,E)\to{\Wcal}^{s-\mu}({\R}^q,\tilde{H}{\hat{\otimes}}_{\pi}E).
\eeq

\subsection{Characterization of singular functions} 

In order to formulate the singular functions of discrete edge asymptotics we endow the Fr\'{e}chet   spaces ${\Kcal}^{s, \gamma}_{\Pcal}(X^{\wedge})$ with the group action
\beq\label{group}
({\kappa}_{\lambda}u)(r,x) :={\lambda}^{(n+1)/2}u(\lambda r,x),
\eeq
$\lambda\in{\R}_+$. The larger spaces ${\Kcal}^{s, \gamma}(X^{\wedge})$ are endowed with this group action as well, and we may consider $\kappa_{\lambda}$ also over the spaces  ${\Kcal}^{s, \gamma}_{\Theta}(X^{\wedge})$ of functions of flatness $\Theta$ relative to $\gamma$. This allows us to define the spaces
$$
{\Wcal}^s\big({\R}^q,{\Kcal}^{s, \gamma}(X^{\wedge})\big)\supset{\Wcal}^s\big({\R}^q,{\Kcal}^{s, \gamma}_{\Pcal}(X^{\wedge})\big)\supset{\Wcal}^s\big({\R}^q,{\Kcal}^{s, \gamma}_{\Theta}(X^{\wedge})\big).
$$
The space ${\Ecal}_{\Pcal}$ of singular functions of cone asymptotics, defined for any fixed cut-off function $\omega$, is not invariant under $\kappa$. Nevertheless, according to \eqref{sub} it is desirable also to decompose ${\Wcal}^s\big({\R}^q,{\Kcal}^{s, \gamma}_{\Pcal}(X^{\wedge})\big)$ into a flat part, namely, ${\Wcal}^s\big({\R}^q,{\Kcal}^{s, \gamma}_{\Theta}(X^{\wedge})\big)$ and a subspace generated by the singular functions. Here we proceed as follows. We first look at the case $\kappa=\id$ and observe that from \eqref{sub} we have a direct sum
$$
{\Wcal}^s\big({\R}^q,{\Kcal}^{s, \gamma}_{\Pcal}(X^{\wedge})\big)_{\id}={\Wcal}^s\big({\R}^q,{\Kcal}^{s, \gamma}_{\Theta}(X^{\wedge})\big)_{\id}+{\Wcal}^s({\R}^q,{\Ecal}_{\Pcal})_{\id}.
$$
Clearly ${\Wcal}^s({\R}^q,{\Ecal}_{\Pcal})_{\id}$ is a subspace of ${\Wcal}^s\big({\R}^q,{\Kcal}^{s, \gamma}_{\Pcal}(X^{\wedge})\big)_{\id}$. According to \eqref{K} we have an isomorphism
$$
\K:{\Wcal}^s\big({\R}^q,{\Kcal}^{s, \gamma}_{\Pcal}(X^{\wedge})\big)_{\id}\to{\Wcal}^s\big({\R}^q,{\Kcal}^{s, \gamma}_{\Pcal}(X^{\wedge})\big)_{\kappa}.
$$
Thus, applying \eqref{K} to the subspace
${\Wcal}^s({\R}^q,{\Ecal}_{\Pcal})_{\id}$ we obtain a subspace of \newline ${\Wcal}^s\big({\R}^q,{\Kcal}^{s, \gamma}_{\Pcal}(X^{\wedge})\big)_{\kappa}$ and a direct decomposition
\beq\label{dec}
{\Wcal}^s\big({\R}^q,{\Kcal}^{s, \gamma}_{\Pcal}(X^{\wedge})\big)_{\kappa}={\Wcal}^s\big({\R}^q,{\Kcal}^{s, \gamma}_{\Theta}(X^{\wedge})\big)_{\kappa}+\K{\Wcal}^s({\R}^q,{\Ecal}_{\Pcal})_{\id}.
\eeq
By virtue of the definition of the operator $\K$ we have
\beq\label{sing}
\begin{split}
\K{\Wcal}^s({\R}^q&,{\Ecal}_{\Pcal})_{\id}=
\textup{span}\{F^{-1}_{y\to\eta}[\eta]^{(n+1)/2}\omega(r[\eta]){\hat{c}}_{jk}(x,\eta)(r[\eta])^{-p_j}{\log}^k(r[\eta])\\
&: 0\le k\le m_j,
j=0,1,\ldots,J, {\hat{c}}_{jk}(x,\eta)\in\hat{H}^s\big({\R}^q_{\eta}, C^{\y}(X)\big)\}
\end{split}
\eeq
for $\hat{H}^s\big({\R}^q_{\eta}, C^{\y}(X)\big):=F_{y\to\eta}H^s\big({\R}^q_y, C^{\y}(X)\big)$.
This follows from the fact that
\beq\label{si}
\begin{split}
{\Wcal}^s({\R}^q&,{\Ecal}_{\Pcal})_{\id}=H^s({\R}^q,{\Ecal}_{\Pcal})=\textup{span}\{\omega(r)c_{jk}(x,y)r^{-p_j}{\log}^kr\\
&: 0\le k\le m_j,j=0,1,\ldots,J, c_{jk}(x,y)\in{H}^s\big({\R}^q_y, C^{\y}(X)\big)\}.
\end{split}
\eeq
The explicit form \eqref{sing} gives us a first impression on the nature of singular terms of the edge asymptotics for a constant (in $y$) asymptotic type $\Pcal$ and finite $\Theta$.

Let us briefly comment the case $s=\y$ where the ${\Wcal}^s$-spaces do not depend on $\kappa$, cf. the relation \eqref{cap}. In that case we may choose the singular functions in the form \eqref{si} for $s=\y$, i.e., the $r$-powers, logarithmic terms and the cut-off function $\omega$ do not contain $\eta$. In other words we have the direct decomposition
\beq\label{A}
\begin{split}
{\Wcal}^{\y}\big({\R}^q, {\Kcal}_{\Pcal}^{\y,\gamma}(X^{\wedge})\big)_{\id}&=
{\Wcal}^{\y}\big({\R}^q, {\Kcal}_{\Theta}^{\y,\gamma}(X^{\wedge})\big)_{\id} +
{\Wcal}^{\y}({\R}^q, {\Ecal}_{\Pcal})_{\id}\\
&=H^{\y}\big({\R}^q, {\Kcal}_{\Theta}^{\y,\gamma}(X^{\wedge})\big)+H^{\y}({\R}^q,{\Ecal}_{\Pcal}).
\end{split}
\eeq
On the other hand we have
\beq\label{B}
{\Wcal}^{\y}\big({\R}^q, {\Kcal}_{\Pcal}^{\y,\gamma}(X^{\wedge})\big)_{\kappa}=
{\Wcal}^{\y}\big({\R}^q, {\Kcal}_{\Theta}^{\y,\gamma}(X^{\wedge})\big)_{\kappa}+
\K{\Wcal}^{\y}({\R}^q, {\Ecal}_{\Pcal})_{\id},
\eeq
cf. the relation \eqref{dec} for $s=\y$. By virtue of \eqref{cap} the only formal difference between \eqref{A} and \eqref{B} for $s=\y$ lies in the difference between $H^{\y}({\R}^q,{\Ecal}_{\Pcal})$
and $\K{\Wcal}^{\y}({\R}^q, {\Ecal}_{\Pcal})_{\id}$.

\begin{prop}\label{equiv}
Let $\Pcal=\{(p_j,m_j)\}_{j=0,1,\ldots,J}$   be a discrete asymptotic type associated with the weight data $(\gamma,\Theta)$ for finite $\Theta=(\vartheta,0]$. Then there is a direct decomposition
$$
{\Wcal}^{\y}\big({\R}^q, {\Kcal}_{\Pcal}^{\y,\gamma}(X^{\wedge})\big)_{\kappa}=
{\Wcal}^{\y}\big({\R}^q, {\Kcal}_{\Theta}^{\y,\gamma}(X^{\wedge})\big)_{\kappa} +
H^{\y}({\R}^q, {\Ecal}_{\Pcal})
$$
where
\beq\label{sin}
\begin{split}
H^{\y}({\R}^q,{\Ecal}_{\Pcal})&=\textup{span}\big\{\omega(r)c_{jk}(x,y)r^{-p_j}{\log}^k(r):
 0\le k\le m_j,\\ j&=0,1,\ldots, J, c_{jk}\in H^{\y}\big({\R}^q_y, C^{\y}(X)\big)\big\}.
\end{split}
\eeq
\end{prop}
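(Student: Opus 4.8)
The plan is to obtain the assertion from the already-recorded decomposition \eqref{A} for the trivial group action, by exploiting that for $s=\y$ the weighted edge spaces do not depend on the group action, cf.\ \eqref{cap}. The only substantial point is to make sure that the dilation family \eqref{group} is a genuine group action on the Fr\'{e}chet spaces ${\Kcal}^{\y,\gamma}_{\Pcal}(X^{\wedge})$ and ${\Kcal}^{\y,\gamma}_{\Theta}(X^{\wedge})$, in a way compatible with their representations as projective limits of Hilbert spaces, so that \eqref{cap} does apply to them; once this is in place, the rest is bookkeeping.

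First I would verify the group action property. For ${\Kcal}^{\y,\gamma}_{\Theta}(X^{\wedge})$ it is immediate: each $\kappa_{\lambda}$ is an isomorphism of every ${\Kcal}^{s,\gamma}(X^{\wedge})$ preserving the flatness condition relative to $\gamma$ that defines the $\Theta$-subspace, and continuity of $\lambda\mapsto\kappa_{\lambda}u$ together with the estimate \eqref{estk} on each Hilbert space of the defining limit is routine. For ${\Kcal}^{\y,\gamma}_{\Pcal}(X^{\wedge})={\Kcal}^{\y,\gamma}_{\Theta}(X^{\wedge})+{\Ecal}_{\Pcal}$ it suffices, by \eqref{sub}, to check that $\kappa_{\lambda}({\Ecal}_{\Pcal})\subset{\Kcal}^{\y,\gamma}_{\Pcal}(X^{\wedge})$. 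Here
\[
\kappa_{\lambda}\big(\omega(r)c(x)r^{-p}{\log}^k r\big)={\lambda}^{(n+1)/2-p}\,\omega(\lambda r)\,c(x)\,r^{-p}\,(\log r+\log\lambda)^k ,
\]
and, writing $\omega(\lambda r)=\omega(r)+\big(\omega(\lambda r)-\omega(r)\big)$ with $\omega(\lambda r)-\omega(r)\in C_0^{\y}({\Rb}_+)$ vanishing near $r=0$, the binomial expansion of $(\log r+\log\lambda)^k$ exhibits the right-hand side as a finite combination of functions $\omega(r)c(x)r^{-p}{\log}^{\ell}r\in{\Ecal}_{\Pcal}$ and of functions $\psi(r)c(x)r^{-p}{\log}^{\ell}r$ with $\psi\in C_0^{\y}({\Rb}_+)$ flat near $r=0$, the latter lying in ${\Kcal}^{\y,\gamma}_{\Theta}(X^{\wedge})$. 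Thus $\kappa_{\lambda}$ preserves ${\Kcal}^{\y,\gamma}_{\Pcal}(X^{\wedge})$, and the same computation shows it preserves each Hilbert space occurring in a projective-limit representation of ${\Kcal}^{\y,\gamma}_{\Pcal}(X^{\wedge})$; the remaining group action axioms and \eqref{estk} are again standard.

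With this settled, \eqref{cap} gives ${\Wcal}^{\y}\big({\R}^q,{\Kcal}^{\y,\gamma}_{\Pcal}(X^{\wedge})\big)_{\kappa}={\Wcal}^{\y}\big({\R}^q,{\Kcal}^{\y,\gamma}_{\Pcal}(X^{\wedge})\big)_{\id}$ and likewise with ${\Kcal}^{\y,\gamma}_{\Theta}$ in place of ${\Kcal}^{\y,\gamma}_{\Pcal}$; moreover \eqref{estk} provides continuous inclusions ${\Wcal}^{s+M}({\R}^q,H)_{\id}\hookrightarrow{\Wcal}^{s}({\R}^q,H)_{\kappa}\hookrightarrow{\Wcal}^{s-M}({\R}^q,H)_{\id}$, so the two structures carry the same Fr\'{e}chet topology on the $s=\y$ space. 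On the other hand \eqref{sub} exhibits ${\Kcal}^{\y,\gamma}_{\Pcal}(X^{\wedge})={\Kcal}^{\y,\gamma}_{\Theta}(X^{\wedge})\oplus{\Ecal}_{\Pcal}$ as a topological direct sum of nuclear Fr\'{e}chet spaces (recall ${\Kcal}^{\y,\gamma}_{\Theta}(X^{\wedge})\cap{\Ecal}_{\Pcal}=\{0\}$), and applying the functor $H^{\y}({\R}^q,\cdot)={\Wcal}^{\y}({\R}^q,\cdot)_{\id}$ (equivalently $H^{\y}({\R}^q)\hat{\otimes}_{\pi}(\cdot)$), which respects topological direct sums, reproduces exactly the direct decomposition \eqref{A}, with $H^{\y}({\R}^q,{\Ecal}_{\Pcal})$ given by \eqref{si} for $s=\y$, that is, by \eqref{sin}. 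Reading \eqref{A} through the above identifications yields the claimed direct decomposition of ${\Wcal}^{\y}\big({\R}^q,{\Kcal}^{\y,\gamma}_{\Pcal}(X^{\wedge})\big)_{\kappa}$, the sum being direct both set-theoretically and topologically. The main obstacle is the group action check of the second paragraph — the stability of the asymptotic type $\Pcal$ under the $r$-dilations built into $\kappa_{\lambda}$, i.e.\ the $\omega(\lambda r)$ versus $\omega(r)$ discrepancy and the $(\log r+\log\lambda)^k$ expansion — which is what legitimizes the use of \eqref{cap}; everything else is formal.
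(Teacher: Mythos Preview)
Your argument is correct and is the short, functorial route that the discussion preceding the proposition already sets up: once \eqref{cap} applies to ${\Kcal}^{\y,\gamma}_{\Pcal}(X^{\wedge})$ and ${\Kcal}^{\y,\gamma}_{\Theta}(X^{\wedge})$, the claim is literally \eqref{A} rewritten with $\kappa$-subscripts. Your check that $\kappa_{\lambda}$ preserves ${\Kcal}^{\y,\gamma}_{\Pcal}(X^{\wedge})$ via the $\omega(\lambda r)-\omega(r)$ and $(\log r+\log\lambda)^k$ dissection is the right justification, and in fact the paper simply asserts this group-action property at the start of Subsection~1.2.

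The paper, however, proves the proposition by a different and more explicit route: it starts from the $\kappa$-decomposition \eqref{B}, whose singular part is $\K{\Wcal}^{\y}({\R}^q,{\Ecal}_{\Pcal})_{\id}$ written out in \eqref{ex} with the $\eta$-dependent ingredients $[\eta]^{-p_j}$, $\log^k(r[\eta])$, and $\omega(r[\eta])$, and shows by hand that each of these can be stripped of $\eta$ modulo ${\Wcal}^{\y}\big({\R}^q,{\Kcal}_{\Theta}^{\y,\gamma}(X^{\wedge})\big)$. The first two are absorbed into the ${\hat{H}}^{\y}$-coefficients; for $\omega(r[\eta])-\omega(r)$ the paper uses Taylor's formula \eqref{Mu} and an operator-valued symbol argument to place the difference in the flat space. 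So where you invoke \eqref{cap} abstractly, the paper effectively reproves the relevant instance of \eqref{cap} constructively, exhibiting the explicit passage from $\omega(r[\eta])$-type singular functions to $\omega(r)$-type ones. Your approach is cleaner for the bare statement; the paper's computation is longer but develops the concrete $\omega(r[\eta])\rightsquigarrow\omega(r)$ machinery that is reused later (notably in Proposition~\ref{aa} for continuous asymptotics).
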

\begin{proof}

 We write down once again
\eqref{sing} for $s=\y$, namely,
\beq\label{ex}
\begin{split}
\K{\Wcal}^{\y}({\R}^q,\,& {\Ecal}_{\Pcal})_{\id}=\textup{span}\big\{F^{-1}_{y\to\eta}[\eta]^{(n+1)/2}\omega(r[\eta]){\hat{c}}_{jk}(x,\eta)(r[\eta])^{-p_j}\\
{\log}^k(r[\eta]),\,& 0\le k\le m_j, j=0,1,\ldots, J, {\hat{c}}_{jk}(x,\eta)\in{\hat{H}}^{\y}\big({\R}^q_{\eta}, C^{\y}(X)\big)\big\}.
\end{split}
\eeq
First it is clear that $[\eta]^{-p_j}$ gives rise to a modification of the coefficients in ${\hat{H}}^{\y}\big({\R}^q_{\eta}, C^{\y}(X)\big)$, since $[\eta]^M{\hat{H}}^{\y}\big({\R}^q_{\eta}, C^{\y}(X)\big)={\hat{H}}^{\y}\big({\R}^q_{\eta}, C^{\y}(X)\big)$ for any real $M$. Moreover, writing $\log\,(r[\eta])=\log r +\log\, [\eta]$ we can dissolve ${\log}^k(r[\eta])$ as a sum of products between powers of $\log r$ and $\log [\eta]$. Also  the $\log\, [\eta]$-terms are absorbed by ${\hat{H}}^{\y}({\R}^q_{\eta}, C^{\y}(X)$, and hence we get rid of $[\eta]$ in \eqref{ex}, except for the cut-off function $\omega(r[\eta])$. In order to remove $[\eta]$ from the cut-off function we apply Taylor's-formula. Choose another cut-off function $\tilde{\omega}\succ\omega$ where $\tilde{\varphi}\succ\varphi$ or $\varphi\prec\tilde{\varphi}$ means that $\tilde{\varphi}$ is equal to $1$ on $\textup{supp}\,\varphi$ such that $\tilde{\omega}(r)\big(\omega(r[\eta])-\omega(r)\big)=\omega(r[\eta])-\omega(r)$ for all $r$ and $\eta$. Then
\beq\label{Mu}
\begin{split}
\omega(r[\eta])-\omega(r)&=\tilde{\omega}(r)\left\{ \frac{(r[\eta])^{N+1}}{N!}\int_0^1(1-t)^N\omega^{(N+1)}(r[\eta]t)\,dt\right.\\
&\left.-\frac{r^{N+1}}{N!}\int_0^1(1-t)^N\omega^{(N+1)}(rt)\,dt\right\}.
\end{split}
\eeq
If we verify that this function belongs to
${\Wcal}^{\y}\big({\R}^q, {\Kcal}_{\Theta}^{\y,\gamma}(X^{\wedge})\big)$ for sufficiently large $N$ we may replace in the formula \eqref{ex} $\omega(r[\eta])$ by $\omega(r)$, i.e., after the comments before on how to remove $[\eta]$ from $(r[\eta])^{-p_j}$ or ${\log}^k(r[\eta])$ we see altogether, that the singular functions of edge asymptotics for $s=\y$ are of the form \eqref{sin}. The fact that a function $\psi\in C_0^{\y}({\Rb}_+)$ of sufficiently high flatness at $r=0$, i.e., $r^{-N}\psi(r)\in C_0^{\y}({\Rb}_+)$ for large and fixed $N$, belongs to ${\Wcal}^{\y}\big({\R}^q,{\Kcal}^{\y,\gamma}_{\Theta}(X^{\wedge})\big)$, follows from the fact that $\psi(r)$ may be regarded as an operator-valued symbol
$$
\psi\in S^{\mu}({\R}^q;\C,\tilde{H}^j)
$$
for $\tilde{H}^j:={\Kcal}^{s,\gamma-n/2-\vartheta-(1+j)^{-1}}({\R}_+)$ and some $\mu=\mu(s)\in\R$, for all $j\in\N$.

 In fact, it is clear that $\psi\in\tilde{H}^j$ for a fixed sufficiently large $N\in\N$. Moreover, $\{\kappa_{\lambda}\}_{\lambda\in{\R}_+}$ defined by \eqref{group} acts on $\tilde{H}^j$ for every $j$. Thus, by virtue of \eqref{estk} we have
 $\|\kappa_{\lambda}\|_{\Lcal({\tilde{H}}^j)}\le c\,{\max\{\lambda, {\lambda}^{-1}\}}^M$ for constants $c,M>0$ depending on the space ${\tilde{H}}^j$, in fact, on $s$. The symbolic estimates \eqref{est1} for $\psi$ rather than $f(\eta)$, here independent of $\eta$, reduce to the estimate for $\beta=0$, and we have
 $$
 \|{\kappa}_{[\eta]}^{-1}\psi\|_{\Lcal(\C,{\tilde{H}}^j)}\le \|{\kappa}_{[\eta]}^{-1}\|_{\Lcal({\tilde{H}}^j)}\|\psi\|_{\Lcal(\C,{\tilde{H}}^j)}\le c[\eta]^{\mu}\|\psi\|_{{\tilde{H}}^j}
 $$
  for some $\mu$ and  constants $c=c(j)>0$. Then, writing $E:=C^{\y}(X)=\underset{\overleftarrow{j\in\N}}{\lim} E^j$, where we may take $E^j:= H^j(X)$, we obtain   $\psi\otimes{\id}_{E^j}\in S^{\mu}({\R}^q;E^j,{\tilde{H}}^j{\otimes}_{H}E^j)$. This gives us the continuity
 $$
 {\Op}_y(\psi\otimes {\id}_{E^j}):H^{\tilde{s}}({\R}^q,E^j)\to{\Wcal}^{{\tilde{s}}-\mu}({\R}^q,{\tilde{H}}^j{\otimes}_{H}E^j)
 $$
for every $\tilde{s}\in\R$, cf. \eqref{cont3} which entails
$$
 {\Op}_y(\psi\otimes {\id}_{E^j}):H^{\y}({\R}^q,E^j)\to{\Wcal}^{\y}({\R}^q,{\tilde{H}}^j{\otimes}_{H}E^j)
 $$
and
\begin{equation*}
\begin{split}
 {\Op}_y\big(\psi\otimes {\id}_{C^{\y}(X)}\big):H^{\y}\big({\R}^q,C^{\y}(X)\big)\to {\Wcal}^{\y}&\big({\R}^q,{\Kcal}_{\Theta}^{\y,\gamma-n/2}({\R}_+)\big){\hat{\otimes}}_{\pi}C^{\y}(X)\\
 = {\Wcal}^{\y}&\big({\R}^q,{\Kcal}_{\Theta}^{\y,\gamma}(X^{\wedge})\big).
 \end{split}
 \end{equation*}
Here we employed the relation ${\Kcal}_{\Theta}^{\y,\gamma}(X^{\wedge})={\Kcal}_{\Theta}^{\y,\gamma-n/2}({\R}_+){\hat{\otimes}}_{\pi}C^{\y}(X)$. For the second summand in \eqref{Mu} we argue as follows. The function  $g(r):=\int_0^1(1-t)^N{\omega}^{(N+1)}(rt)\,dt$ on ${\Rb}_+$ belongs to $C^{\y}({\Rb}_+)$ and is bounded on ${\Rb}_+$ including all its $r$-derivatives. The same is true of $f(\eta)=g(r[\eta])$ as a function in $r\in{\Rb}_+$. The notation $f(\eta)$ indicates that $f$ is regarded as an operator-valued symbol. The operator of multiplication by $g(r)$ induces continuous operators
$g: {\tilde{H}}^j\to {\tilde{H}}^j$ for all $j$. Thus
${\Op}_y(f):{\Wcal}^s({\R}^q,{\tilde{H}}^j)\to {\Wcal}^s({\R}^q,{\tilde{H}}^j)$
is continuous for every $s\in\R$, cf. Remark \ref{S}. Setting $h(\eta)=f(\eta){\tilde{\omega}}(r)(r[\eta])^{N+1}/{N!}$ and $\psi(r)=\tilde{\omega}(r){r^{N+1}}/{N!}$ we have
$$
{\Op}_y(h\otimes{\id}_{E^j})=[\eta]^{N+1}{\Op}_y(f\otimes{\id}_{E^j}){\Op}_y(\psi\otimes{\id}_{E^j}).
$$
From the first step of the proof we know that
$$
\psi\otimes{\id}_{E^j}\in S^{\mu+N+1}({\R}^q;E^j,{\tilde{H}}^j{\otimes}_H{\tilde{H}}^j).
$$
It follows altogether
$$
{\Op}_y(h)\otimes{\id}_{E^j}:H^{\tilde{s}}({\R}^q,E^j)\to {\Wcal}^{\tilde{s}-(\mu+N+1)}({\R}^q,{\tilde{H}}^j{\otimes}_HE^j)
$$
 for every $s\in\R$, and finally
$$
{\Op}_y(h)\otimes{\id}_E:H^{\y}\big({\R}^q,C^{\y}(X)\big)\to {\Wcal}^{\y}\big({\R}^q,{\Kcal}_{\Theta}^{\y,\gamma}(X^{\wedge})\big).
$$
\end{proof}

The case of variable discrete asymptotics will be prepared here by a number of specific observations. We saw  that the space \eqref{sing} is the image of $H^s({\R}^q)\hat{\otimes}_{\pi}C^{\y}(X)$ under the action of a pseudo-differential operators
$$
{\Op}_y(k)\hat{\otimes}_{\pi}{\id}_{C^{\y}(X)}:H^s({\R}^q)\hat{\otimes}_{\pi}C^{\y}(X)\to \K{\Wcal}^s({\R}^q,{\Ecal}_{\Pcal})_{\id}
$$
for symbols $k(\eta)\in S_{\clw}^0\big({\R}^q;\C,{\Kcal}^{\y,\gamma-n/2}({\R}_+)\big)$, $k(\eta):c\to k(\eta)c, c\in\C$, where $k(\eta):=\sum_{j=0}^J\sum_{k=0}^{m_j}c_{jk}[\eta]^{(n+1)/2}\omega(r[\eta])(r[\eta])^{-p_j}{\log}^k(r[\eta])$ for arbitrary constants $c,c_{jk}\in\C$, $0\le j\le m_j,$ $j=0,1,\ldots, J$.

Let us form the compact set $K:={\pi}_{\C}\Pcal=\{p_j\}_{j=0,1,\ldots,J}$ and choose any counter clockwise oriented (say, smooth) curve $C$ surrounding $K$ such that the winding number with respect to any $z\in K$ is equal to $1$. The function
$$
M_{r\to z}\big(\omega(r)\sum_{j=0}^J\sum_{k=0}^{m_j}c_{jk}(x)r^{-p_j}{\log}^kr\big)(z):=f(z)
$$
with $M$ being the weighted Mellin transform for the weight $\gamma-n/2$ is meromorphic with poles at the points $p_j$ of multiplicity $m_j+1$ and Laurent coefficients $(-1)^kk!c_{jk}(x)$. This comes from the identity
$$
M_{r\to z}\big(\omega(r)r^{-p}{\log}^kr\big)(z)=\frac{(-1)^kk!}{(z-p)^{k+1}}
$$
for any $p\in\C, k\in\N$, modulo an entire function. For any compact set $K\subset\C$ by ${\Acal}'(K)$ we denote the space of analytic functionals carried by $K$, see \cite[Vol. 1]{Horm5} or \cite[Section 2.3]{Kapa10}. The space ${\Acal}'(K)$ is a nuclear Fr\'{e}chet space. Given another Fr\'{e}chet space $E$ we set ${\Acal}'(K,E):={\Acal}'(K){\hat{\otimes}}_{\pi}E$. Now
\beq\label{f}
{\Acal}(\C)\ni h\to \la\zeta_{f,z},h\ra:=\int_{C}f(z)h(z)\dbar z
\eeq
is an analytic functional with carrier $K$, more precisely, $\zeta\in{\Acal}'\big(K, C^{\y}(X)\big)$. It is of finite order in the sense of a linear combination  of finite order derivatives of the Dirac measures at the points $p_j$. Inserting $h(z):=r^{-z}$ we just obtain
\beq\label{pz}
 \la\zeta_{f,z},r^{-z}\ra=\sum_{j=0}^J\sum_{k=0}^{m_j}c_{jk}(x)r^{-p_j}{\log}^kr,
\eeq
i.e., the singular functions are again reproduced as a linear superposition of $r^{-z}$ with the density $\zeta$.

The above-mentioned singular functions \eqref{sing} of constant discrete edge asy\-mptotics of type $\Pcal$ may be written in the form
$$
F^{-1}_{\eta\to y}\big\{[\eta]^{(n+1)/2}\omega(r[\eta])\la\hat{\zeta}(\eta)_z,(r[\eta])^{-z}\ra\big\}
$$
where $\hat{\zeta}(\eta)\in{\Acal'\big(K,{\hat{H}}^s\big({\R}^q_{\eta},C^{\y}(X)\big)\big)}$ is applied to $(r[\eta])^{-z}$; subscript $z$ indicates the pairing with respect to $z$. The form of $\hat{\zeta}(\eta)$ is subordinate to $\Pcal$ in the sense that $\la\hat{\zeta}(\eta)_z,r^{-z}\ra$ is a ${\hat{H}}^s\big({\R}^q_{\eta},C^{\y}(X)\big)$-valued meromorphic function with poles at the points $p_j\in\pi_{\C}{\Pcal}$ of multiplicity $m_j+1$. To have a notation, if $E$ is a Fr\'{e}chet space then a $\zeta\in{\Acal}'(K,E)$ is said to be subordinate to $\Pcal$ if $\la \zeta, r^{-z}\ra$ is meromorphic with such poles and multiplicities, determined by $\Pcal$. Let ${\Acal}'_{\Pcal}(K,E)$ denote the subspace of all $\zeta\in{\Acal}'(K,E)$ of that kind.

\section{Branching edge asymptotics} 

\subsection{Wedge spaces with branching edge asymptotics} 

The role of the present section is to deepen and complete  material from \cite{Schu67} on wedge space with variable branching edge asymptotics. To this end we first recall the notion of variable discrete asymptotic types.

Let ${\Ucal}(\Omega)$ for an open set $\Omega\subseteq{\R}^q$
denote the system of all open subsets $U\subset\Omega$ with compact
closure $\overline{U}\subset\Omega$.

\begin{defn}\label{v.ast}
A variable discrete asymptotic type $\Pcal$ over an open set
$\Omega\subseteq {\R}^q$ associated  with weight data $(\gamma,
\Theta), \Theta=(\vartheta, 0], -\y<\vartheta<0$, is a system of
sequences of pairs
\beq\label{v.pair}
\Pcal
(y)=\{\big(p_j(y),m_j(y)\big)\}_{j=0,1,\ldots, J(y)}
 \eeq
 for $J(y)\in\N$, $y\in \Omega,$ such that $\pi_{\C} \Pcal:=\{p_j(y)\}_{j=0,1,\ldots, J(y)}
 \subset\{(n+1)/2-\gamma+\vartheta<\reb \,z<(n+1)/2-\gamma\}$
  for all $y\in\Omega,$ and for every $b=(c,U),
  (n+1)/2-\gamma+\vartheta<c<(n+1)/2-\gamma$, $U\in{\Ucal
  }(\Omega)$,
   there are sets $\{U_i\}_{0\le i\le N}$, $\{K_i\}_{0\le i\le N}$,
    for some $N=N(b)$, where $U_i\in{\Ucal}(\Omega), 0\le i\le N,$
    form an open covering of $\overline{U}$, moreover,
 \beq\label{Ki}
 K_i\Subset\C, K_i\subset \{c-{\varepsilon}_i<\reb
    \,z<(n+1)/2-\gamma\} \quad\textup{for some} \quad {\varepsilon}_i>0,
 \eeq
such that
\beq\label{proj}
    \pi_{\C} \Pcal\cap\{c-{\varepsilon}_i<\reb \,z
    <(n+1)/2-\gamma\}\subset K_i \quad \textup{for all} \quad y\in U_i
\eeq
    and
    $$
    \sup_{y\in U_i}\sum_j\big(1+m_j(y)\big)<\y
     $$
     where the sum is taken over those $0\le j\le J(y)$ such that $p_j(y)\in K_i, i=0,1,
     \ldots, N.$
\end{defn}
We will say that a variable discrete asymptotic type $\Pcal$ satisfies the shadow condition if $(p(y),m(y))\in{\Pcal}(y)$ implies $(p(y)-l,m(y))\in{\Pcal}(y)$ for every $l\in\N$, such that $\reb \,p(y)-l>(n+1)/2-\gamma
+\vartheta$, for all $y\in\Omega$. Observe that such a condition is natural when we ask the spaces of functions $u$ with asymptotics \eqref{as} to be closed under multiplication by functions $\varphi\in C^{\y}(\overline{\R}_+),$ and then the Taylor asymptotics of $\varphi$ at $r=0$ contributes to the asymptotics of $\varphi u$. For any open
$\tilde{\Omega}\subseteq\Omega$ we define the restriction ${\Pcal}|_{\tilde{\Omega}}:=\{\big(p(y),m(y)\big)\in\Pcal : y\in\tilde{\Omega}\}$. We also define restrictions to
$A\subseteq\C$ by setting ${\ur}_{A}\Pcal :=\{\big(p(y),m(y)\big)\in\Pcal : p(y)\in A\}$.

In future if $K\subset\C$ is a compact set and we are  talking about a curve $C\subset\C\setminus K$ counter clockwise surrounding $K$ we tacitly assume that the winding number is $1$ with respect to every $z\in K$. It is well-known, that for every $K$ such a $C$ always exists in an $\varepsilon$-neighbourhood of $K$ for any $\varepsilon>0$.

Parallel to variable discrete  asymptotic types $\Pcal $ we consider families of analytic
functionals that are $y$-wise discrete and of finite order.
Typical families of that kind are generated by functions
 $f(y,z)\in C^{\y}\big(\Omega,{\Acal}(\C\setminus K)\big)$  that extend
 across $K$ for every $y\in\Omega$ to a meromorphic function in $z$, with finitely many poles
 $p_0(y), p_1(y), \ldots, p_{J}(y)\in K$ where $p_{j}(y)$ is of multiplicity $m_j(y)+1$. The corresponding system $\Pcal (y)$ of the form \eqref{v.pair}
is a variable discrete asymptotic type in the sense of Definition \ref{v.ast}.

More generally, if we have a family of meromorphic functions $f(y,z)$, paramet\-rized by $y\in\Omega$ we will say that $f$ is subordinate to \eqref{v.pair} if for every $y\in\Omega$ the system of poles is contained in ${\pi}_{\C}{\Pcal}(y)$ and and the multiplicities are $\le m_j(y)+1$. With such an $f(y,z)$ we can associate a family of analytic functionals as follows. We fix $b=(c,U)$ as in Definition \ref{v.ast} and choose a pair $(U_i,K_i)$ and a smooth curve
$C_i\subset\{c-{\varepsilon}_i<\reb\,z<(n+1)/2-\gamma\}$ counter clockwise surrounding $K_i$, and then we form $\delta_i(y)\in{\Acal}'(K_i)$ by
$$
\la\delta_i(y)_z,h\ra:=\int_Cf(y,z)h(z)\,\dbar z,
$$
$h\in{\Acal}(\C)$. The family $f$ is called smooth in $y\in\Omega$ if $\delta_i(y)\in C^{\y}\big(U_i, {\Acal}'(K_i)\big)$ for all $i=0,1,\ldots,N$, and if this is also the case for all $U\in{\Ucal}(\Omega)$.

In the following constructions it will be convenient to fix for any given $U\in{\Ucal}$ a system of $\varphi_i\in C^{\y}_0(U_i)$, $i=0,1,\ldots, N$, such that $\sum_{i=0}^N{\varphi}_i=1$ for all $y\in\overline{U}$. This yields a family
\beq\label{BE}
\delta_U(y):=\sum_{i=0}^N{\varphi}_i(y){\delta}_i(y)\in C^{\y}\big(U, {\Acal}'(K)\big)
\eeq
for $K:=\bigcup_{i=0}^NK_i$ which has the property that $M_{r\to z}\big(\omega(r)\la\delta_U(y)_w,r^{-w}\ra\big)$ is a family of meromorphic functions over $U$ equal to $f(y,z)|_{U}$ modulo a function in $C^{\y}\big(U, {\Acal}\big(c-\varepsilon<\reb\,z<(n+1)/2-\gamma\big)\big)$, $\varepsilon=\min\{\varepsilon_0,\varepsilon_1,\ldots,\varepsilon_N\}$.

Let us summarize these observations in the analogous case of $E$-valued meromorphic functions and $E$-valued analytic functionals as follows.

Given a Fr\'{e}chet space $E$ and a family of $E$-valued functions
$f(y,z)$ parametri\-zed by $y\in\Omega$ and meromorphic in
$(n+1)/2-\gamma+\vartheta<\reb \,z  <(n+1)/2-\gamma$, we say that
$f$ is subordinate to $\Pcal$ if every pole of $f(y, \cdot)$
belongs to a pair $(p(y),m(y))\in\Pcal$ where the
multiplicity is  less or equal $m(y)+1$.

Let $U\in\Ucal,
K\subseteq\C$, then $C^{\y}\big(U, \Acal(\C\setminus
K,E)\big)^{\bullet}$ will denote the subspace of all $f(y,z)\in
C^{\y}\big(U, \Acal(\C\setminus K,E)\big)$ that extend for every
$y\in U$ to a meromorphic function across $K$, again denoted by
$f(y,z)$, where poles and multiplicities minus $1$ form a
$\Pcal$ as in Definition \ref{v.ast}. If we specify
$\Pcal$ we also denote the space of such functions by $C^{\y}\big(\Omega,
{\Acal}_{\Pcal}(\C,E)\big)$.

If $f(y,z)$ is any family of meromorphic functions parametrized by
$y\in\Omega$ such that the pattern of poles together with
multiplicities minus $1$ is a $\Pcal$ as in Definition
\ref{v.ast} we may define smoothness in $y$ as follows. First we fix
any $y_0\in\Omega$ and a $b=(c, U)$ and sets $K_i, U_i, i=0,1,\ldots
N,$ as in Definition \ref{v.ast}. Choose compact smooth
 curves $C_i\subset\{c-{\varepsilon}_i<\reb \,z
  <(n+1)/2-\gamma\}$ counter clockwise surrounding $K_i$  and define ${\delta}_i(y)
 \in{\Acal}'(K_i, E)$ by
    $\la{\delta}_i(y)_z,h\ra:=\int_{C_i} f(y,z)h(z)\dbar z,$ $h\in{\Acal}(\C),
    y\in U_i$. Then $f$ is called smooth if ${\delta}_i\in C^{\y}\big(U_i,{\Acal}'(K_i,
   E)\big)$ for $i=0,1,\ldots,N$.

\begin{rem}\label{Me}
Consider the above-mentioned $f(y,z)$. Setting $f_i(y,z):=M_{r\to z}\omega(r)\newline \la{\delta}_i(y)_z, r^{-z}\ra$
with $M$ being the weighted Mellin transform for any weight $\beta$
such that ${\Gamma}_{1/2-\beta}\cap K_i=\emptyset$ we obtain an
element in $C^{\y}\big(U_i,{\Acal}(\C\setminus K_i, E)\big)$ subordinate to
${\mathcal P}|_{U_i}$. Clearly, in this case we have $f_i(y,z)\in
C^{\y}\big(U_i,{\Acal}(\C\setminus K_i, E)\big)$. Moreover, if
$\{{\varphi}_i\}_{i=0,1,\ldots,N}$ is a system ${\varphi}_j\in C_0^{\y}(U_j)$ such that $\sum_{j=0}^N{\varphi}_j\equiv 1$ over $\overline{U}\subset \bigcup_{i=0}^NU_i,$
then
$f_b(y,z):=\sum_{i=0}^N{\varphi}_i(y)f_i(y,z)$ satisfies the
relation $f|_{U}=f_b \quad \textup{mod} \quad C^{\y}\big(U,{\Acal}(c-\varepsilon<\reb \,z <(n+1)/2-\gamma, E)\big)$ for
$\varepsilon:=\min \{{\varepsilon}_0,{\varepsilon}_1,\ldots,
{\varepsilon}_N \}$.
\end{rem}

 Let us now recall from \cite{Schu67} the definition of weighted edge distributions of
variable discrete asymptotic type $\Pcal$, cf.~Definition
\ref{v.ast}.
\begin{defn}\label{sobast}
Let $\Omega\subseteq {\R}^q$ be open and let $\Pcal$ be a variable discrete asymptotic type, cf.~Definition \ref{v.ast}  associated with
the weight data $(\gamma, \Theta)$, $\Theta=(\vartheta,0]$ finite.
Then ${\Wcal}^s_{\loc}\big(\Omega, {\Kcal}^{s, \gamma}_{\Pcal}(X^{\wedge
})\big)$ for $s\in\R$ is defined to be the set of all
$u\in{\Wcal}^s_{\loc}\big(\Omega, {\Kcal}^{s, \gamma}(X^{\wedge })\big)$
such that for every $b:=(c,U)$  for any
$(n+1)/2-\gamma+\vartheta<c<(n+1)/2-\gamma$ and
$U\in\mathcal{U}(\Omega)$ there exists a compact set
$K_b\subset\{(n+1)/2-\gamma+\vartheta<\reb\,z<(n+1)/2-\gamma\}$ and a
function $\hat{f}_b(y,z,\eta)\in{C}^{\y}\big(U, {\Acal}(\C\setminus K_b,
E^s)\big)^{\bullet}$ for
\beq\label{Es}
E^s:={\hat{H}}^s\big({\R}^q_{\eta},C^{\y}(X)\big)
\eeq
subordinate to $\Pcal|_{U}$ and a corresponding
${\hat{\delta}}_b(y,\eta)\in{C}^{\y}\big(U, {\Acal}'(K_b,
E^s)\big)^{\bullet},$
\beq\label{De}
\la{\hat{\delta}}_b(y,\eta)_z,
h\ra=\int_{C_b}\hat{f}_b(y,z,\eta)h(z)\,\dbar z,\,\,h\in{\Acal(\C)},
\eeq
with $C_b$ counter clockwise surrounding $K_b$, such that
\beq\label{nsing}
u(r,x,y)-F^{-1}_{\eta\to
y}\{[\eta]^{(n+1)/2}\omega(r[\eta])\la{\hat{\delta}}_b(y,\eta)_z,
(r[\eta])^{-z}\ra\}\in{\Wcal}^s_{\loc}\big(U, {\Kcal}^{s,
\gamma+\beta}(X^{\wedge})\big)
\eeq
 for $\beta:={\beta}_0+\varepsilon $ for any $0<\varepsilon<\varepsilon(b),
 {\beta}_0:=(n+1)/2-\gamma-c.$ Moreover, we set
 $$
{\Wcal}^s_{\comp}\big(\Omega, {\Kcal}^{s, \gamma}_{\Pcal}(X^{\wedge
})\big):={\Wcal}^s_{\loc}\big(\Omega, {\Kcal}^{s, \gamma}_{\Pcal}(X^{\wedge
})\big)\cap{\Wcal}^s_{\comp}\big(\Omega, {\Kcal}^{s, \gamma}(X^{\wedge
})\big).
 $$
\end{defn}
For convenience, as a consequence of Definition \ref{sobast}, we
characterize the space ${\Wcal}^s_{\loc}\big(\Omega, {\Kcal}^{s,
\gamma}_{\Pcal}(X^{\wedge })\big)$ as the set of all
$u\in{\Wcal}^s_{\loc}(\Omega, {\Kcal}^{s, \gamma}\big(X^{\wedge })\big)$
such that for every $b=(c,U)$ the function $u|_{U}$ belongs to the
space
\beq\label{NS}
{\Wcal}^s_{\loc}\big(U, {\Kcal}^{s,
\gamma+\beta}(X^{\wedge })\big)+{\Wcal}^s_{b,\Pcal}(U)
\eeq
where
${\Wcal}^s_{b,\Pcal}(U):=\{F^{-1}_{\eta\to
y}({\kappa}_{[\eta]}\omega(r)\la{\hat{\delta}}_{b}(y,\eta)_z,r
^{-z}\ra)\}$, ${\hat{\delta}}_{b}(y,\eta)$ as in \eqref{De} for an
\newline${\hat{f}}_{b}(y,z,\eta)$ subordinate to
${\Pcal}_b:={\ur}_{K_b}({\Pcal}|_{U})$.

Definition \ref{sobast} expresses asymptotics of type $\Pcal$ in terms of pairs $U_i,K_i$ as in Definition \ref{v.ast}, i.e., localizations in $y\in\Omega$ and $z\in\C$. Therefore, for simplicity we focus on an open set $U\in{\Ucal}(\Omega)$ and a compact $K$ in the complex plane, $K\subset\{c-\varepsilon<\reb\,z<(n+1)/2-\gamma\}$ for some $\varepsilon>0$, such that ${\pi}_{\C}{\Pcal}\subset K$. This allows us to drop subscript $b$, i.e., we may write $K=K_b$, $\delta=\delta_b$,
\beq\label{Del}
{\hat{\delta}}(y,\eta)\in C^{\y}\big(U, {\Acal}'(K,E^s)\big)^{\bullet}.
\eeq
It is instructive to compare the notion of $y$-wise discrete asymptotics with continuous asymptotics where ${\hat{\delta}}(y,\eta)\in C^{\y}\big(U, {\Acal}'(K,E^s)\big)$.

Formally, the singular functions of continuous asymptotics are as before, namely, of the form
$$
F^{-1}_{\eta\to y}\{[\eta]^{(n+1)/2}\omega(r[\eta])\la\hat{\delta}(y,\eta)_z,(r[\eta])^{-z}\ra\}.
$$
In contrast to the latter explicit $y$-dependence of the analytic functionals there is also the case of constant continuous asymptotics  carried by the compact set $K$. In this case we can proceed in an analogous manner as in the constant discrete case, outlined in Subsection 1.2. When we fix the position of $K$ as above, i.e., $K\subset\{(n+1)/2-\gamma+\vartheta<\reb\,z<(n+1)/2-\gamma\}$,
then we have
$$
\omega(r)\la\zeta_z,r^{-z}\ra\subset{\Kcal}^{\y,\gamma}(X^{\wedge})
$$
for every $\zeta\in{\Acal}'\big(K,C^{\y}(X)\big)$, and
\beq\label{E}
{\Ecal}_K:=\{\omega(r)\la\zeta_z,r^{-z}\ra : \zeta\in{\Acal}'\big(K,C^{\y}(X)\big)\}
\eeq
is a continuous analogue of ${\Ecal}_{\Pcal}$ in \eqref{sing2}. Again we have ${\Kcal}_{\Theta}^{s,\gamma}(X^{\wedge})\bigcap{\Ecal}_K=\{0\}$ for any $s\in\R$, and analogously as \eqref{sub} we set
\beq\label{cros}
{\Kcal}_{\Ccal}^{s,\gamma}(X^{\wedge}):={\Kcal}_{\Theta}^{s,\gamma}(X^{\wedge})+{\Ecal}_K.
\eeq
The notation $\Ccal$ means that with $K$ we associate a corresponding continuous asymptotic type. The space ${\Ecal}_K$ is nuclear Fr\'{e}chet in a natural way via an isomorphism
\beq\label{iso}
{\Ecal}_K\cong{\Acal}'\big(K,C^{\y}(X)\big).
\eeq
Thus \eqref{cros} is Fr\'{e}chet in the topology of the direct sum. The group action
$\{{\kappa}_{\lambda}\}_{\lambda\in{\R}_+}$ defined by \eqref{group} is also defined on ${\Kcal}_{\Ccal}^{s,\gamma}(X^{\wedge})$ which allows us to define
$$
{\Wcal}^s\big({\R}^q,{\Kcal}_{\Ccal}^{s,\gamma}(X^{\wedge})\big):=
{\Wcal}^s\big({\R}^q,{\Kcal}_{\Theta}^{s,\gamma}(X^{\wedge})\big)+\K H^s({\R}^q,{\Ecal}_K).
$$
From \eqref{iso} it follows that
\beq\label{HE}
H^s({\R}^q_y,{\Ecal}_K)=\{\omega(r)\la\zeta(y)_z,r^{-z}\ra:\zeta\in{\Acal}'\big(K,H^s\big({\R}^q_y,C^{\y}(X)\big)\big)\}.
\eeq
Then
\beq\label{singK}
\begin{split}
\K H^s({\R}^q_y,{\Ecal}_K)&=\{F^{-1}_{\eta\to y}\kappa_{[\eta]}[\omega(r)F_{y'\to\eta}\la\zeta(y')_z,r^{-z}\ra]:\\ \zeta(y')&\in{\Acal}'\big(K,H^s\big({\R}^q_{y'},C^{\y}(X)\big)\big)\}.
\end{split}
\eeq
 Let us now make some general remarks about managing analytic functionals. If $E$ is a Fr\'{e}chet space and ${\Acal}'(K,E)$ the space of $E$-valued analytic functionals carried by the compact set $K\st\C$ we have
\beq\label{KC}
{\Acal}'(K,E)={\Acal}'(K^{\textup{c}},E)
\eeq
where $K^{\textup{c}}$ means the complement of the unbounded connected component of $\C\setminus K$, cf. \cite[Section 2.3]{Kapa10}.
Recall that the classical Cousin theorem also admits decompositions of the carrier, more precisely, if $K_1,K_2$ are compact sets in $\C$, then setting $K_1+K_2:=(K_1\cup K_2)^{\textup{c}}$ we have a non-direct sum of Fr\'{e}chet spaces
\beq\label{sum1}
{\Acal}'(K,E)={\Acal}'(K_1,E)+{\Acal}'(K_2,E),
\eeq
for any Fr\'{e}chet space $E$, cf. also \cite{Schu2}.

In the discussion so far we assumed that $K\bigcap\Gamma_{(n+1)/2-\gamma}=\emptyset$. However, in the edge calculus with continuous asymptotics also requires the case $K\bigcap\Gamma_{(n+1)/2-\gamma}\neq\emptyset$. Without loss of generality we may assume $K=K^{\textup{c}}$. Then \eqref{cros} is not direct and only $\{z\in K:\reb\,z>(n+1)/2-\gamma+\vartheta\}$ contributes to $\Ccal$. Writing $K$ as a sum $K=K_1+K_2$ for $K_1=\{z\in K:\reb\,z\le(n+1)/2-\gamma+\vartheta\}$, $K_2=\{z\in K:\reb\,z\ge(n+1)/2-\gamma+\vartheta\}$ we have a decomposition \eqref{sum1}. Therefore, every  $\zeta\in{\Acal}'(K,E)$ may be written as $\zeta=\zeta_1+\zeta_2$ for suitable $\zeta_i\in{\Acal}'(K_i,E), i=1,2$. This leads to a decomposition of the space \eqref{singK} as
$$
\K H^s(\R^q,{\Ecal}_K)=\K H^s(\R^q,{\Ecal}_{K_1})+\K H^s(\R^q,{\Ecal}_{K_2}).
$$
Clearly we have $\K H^s(\R^q,{\Ecal}_{K_1})\st {\Wcal}^s\big(\R^q,{\Kcal}_{\Theta}^{\y,\gamma}(X^{\wedge})\big)$, but also $K_2$ gives rise to a flat contribution, namely, from $K_0:=K_2\bigcap\Gamma_{(n+1)/2-\gamma+\vartheta}$. The notions and results that we are formulating here on continuous asymptotics have a natural modification for the case of arbitrary $K$. If necessary, we have to admit flat contributions.

\begin{prop}\label{aa}
For a compact set $K\st\{(n+1)/2-\gamma+\vartheta<\reb\,z<(n+1)/2-\gamma\}$ we have
\begin{equation*}
\begin{split}
\K H^s({\R}^q_y,{\Ecal}_K)=&\{\omega(r)F^{-1}_{\eta\to y}\kappa_{[\eta]}[F_{y'\to\eta}\la\zeta(y')_z,r^{-z}\ra]:\\ &\zeta(y')\in{\Acal}'\big(K,H^s\big({\R}^q_{y'},C^{\y}(X)\big)\big)\}
\end{split}
\end{equation*}

\nt ${\textup{mod}}\,{\Wcal}^s\big({\R}^q,{\Kcal}_{\Theta}^{\y,\gamma}(X^{\wedge})\big)$.
\end{prop}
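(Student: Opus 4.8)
The plan is to reduce the assertion to a single replacement of the $\eta$-dependent cut-off $\omega(r[\eta])$ by $\omega(r)$ and then to the flatness mechanism from the proof of Proposition~\ref{equiv}. By the explicit description \eqref{singK}, every element of $\K H^s({\R}^q_y,{\Ecal}_K)$ has the form $F^{-1}_{\eta\to y}\kappa_{[\eta]}\big[\omega(r)F_{y'\to\eta}\la\zeta(y')_z,r^{-z}\ra\big]$ with $\zeta\in{\Acal}'\big(K,H^s({\R}^q_{y'},C^{\y}(X))\big)$. Since by \eqref{group} the operator $\kappa_{[\eta]}$ acts by dilation in $r$, it satisfies $\kappa_{[\eta]}\big[\omega(r)g(r,x)\big]=\omega(r[\eta])\,\kappa_{[\eta]}\big[g(r,x)\big]$, so this element equals $F^{-1}_{\eta\to y}\big\{\omega(r[\eta])\,\kappa_{[\eta]}[F_{y'\to\eta}\la\zeta(y')_z,r^{-z}\ra]\big\}$; the right-hand member of the claim attached to the same $\zeta$ is, $\omega(r)$ being $\eta$-independent, the same expression with $\omega(r[\eta])$ replaced by $\omega(r)$. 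Hence the two differ exactly by
$$
R_{\zeta}:=F^{-1}_{\eta\to y}\big\{\big(\omega(r[\eta])-\omega(r)\big)\,\kappa_{[\eta]}[F_{y'\to\eta}\la\zeta(y')_z,r^{-z}\ra]\big\},
$$
and conversely; both inclusions in the claimed equality modulo ${\Wcal}^s\big({\R}^q,{\Kcal}^{\y,\gamma}_{\Theta}(X^{\wedge})\big)$ therefore reduce to showing $R_{\zeta}\in{\Wcal}^s\big({\R}^q,{\Kcal}^{\y,\gamma}_{\Theta}(X^{\wedge})\big)$ for every admissible $\zeta$ (in fact, that corresponding elements for the same $\zeta$ agree modulo this space).

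To prove this membership I would repeat the argument in the proof of Proposition~\ref{equiv}, now with the nuclear Fr\'echet space $E:={\Acal}'\big(K,C^{\y}(X)\big)\cong{\Ecal}_K$ (cf.~\eqref{iso}), written $E=\underset{\overleftarrow{j\in\N}}{\lim}E^j$ with trivial group action on each $E^j$, in the role of $C^{\y}(X)$ and the analytic functional $\zeta$ in the role of the coefficients $c_{jk}$; recall the continuous embedding ${\Ecal}_K\hookrightarrow{\Kcal}^{\y,\gamma}(X^{\wedge})$. Choosing $\tilde{\omega}\succ\omega$ with $\tilde{\omega}(r)\big(\omega(r[\eta])-\omega(r)\big)=\omega(r[\eta])-\omega(r)$, Taylor's formula \eqref{Mu} gives, for a large fixed $N$,
$$
\omega(r[\eta])-\omega(r)=\tfrac{1}{N!}\,\tilde{\omega}(r)\big\{(r[\eta])^{N+1}g_N(r[\eta])-r^{N+1}g_N(r)\big\}
$$
with $g_N\in C^{\y}({\Rb}_+)$ bounded together with all $r$-derivatives. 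Just as in Proposition~\ref{equiv}, $\tilde{\omega}(r)r^{N+1}\in\tilde{H}^j:={\Kcal}^{s,\gamma-n/2-\vartheta-(1+j)^{-1}}({\R}_+)$ for $N$ large and is treated as an operator-valued symbol; composing its tensor with ${\id}_{E^j}$ with the multiplication operators by the bounded functions $g_N(r[\eta])$, with the power $[\eta]^{N+1}$ coming from $(r[\eta])^{N+1}=[\eta]^{N+1}r^{N+1}$, and with the $z$-pairing (handled, after localizing the carrier via \eqref{KC}--\eqref{sum1}, through a representing holomorphic family as in Definition~\ref{sobast}), one realizes $R_{\zeta}$ as ${\Op}_y$ of an operator-valued symbol applied to $\zeta\in H^s({\R}^q,E)$; the flatness $N+1$ at $r=0$ is what makes this symbol take values in ${\Lcal}\big(E^j,\tilde{H}^j{\otimes}_HE^j\big)$. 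By \eqref{cont3}--\eqref{cont2} this yields continuity into ${\Wcal}^s\big({\R}^q,\tilde{H}^j{\otimes}_HE^j\big)$ for every $j$, and the projective limit over $j$ and over the conical smoothness, together with the identification ${\Kcal}^{\y,\gamma}_{\Theta}(X^{\wedge})={\Kcal}^{\y,\gamma-n/2}_{\Theta}({\R}_+)\hat{\otimes}_{\pi}C^{\y}(X)$ used already in the proof of Proposition~\ref{equiv}, places $R_{\zeta}$ in ${\Wcal}^s\big({\R}^q,{\Kcal}^{\y,\gamma}_{\Theta}(X^{\wedge})\big)$.

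The main obstacle is the control uniform in $\eta$: $\omega(r[\eta])-\omega(r)$ is not flat at $r=0$ uniformly in $\eta$ — it merely vanishes on the shrinking interval $[0,c_0/[\eta]]$ — and the Taylor device is exactly what trades this pointwise vanishing for a genuine factor of flatness $N+1$ at the price of a controlled power $[\eta]^{N+1}$. One then has to verify that the interplay of the dilation $\kappa_{[\eta]}$ with multiplication by $\tilde{\omega}(r)r^{N+1}$ and by the bounded symbols $g_N(r[\eta])$ keeps the twisted symbolic estimates \eqref{ests} valid, and — the point which is more delicate here than in Proposition~\ref{equiv}, whose target was the $\kappa$-independent space ${\Wcal}^{\y}$ — that the order of the resulting symbol is such that ${\Op}_y$ maps $H^s$ into ${\Wcal}^s$, with no loss of Sobolev smoothness in $y$, while the conical smoothness is made arbitrarily high through the projective limit defining ${\Kcal}^{\y}$. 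It is precisely here that the preparations of Section~1 on $\eta$-independent cut-off functions, now transplanted to the continuous-asymptotics situation, are used; once this uniform bookkeeping is secured the proposition follows, flat contributions being admitted, if necessary, exactly as indicated for $K$ meeting the boundary lines of the strip.
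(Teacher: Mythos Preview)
Your reduction to the remainder $R_{\zeta}$ is correct, and the identification of the difficulty is accurate. But the approach you propose --- transplanting the Taylor device from Proposition~\ref{equiv} --- does not close the gap you yourself flag, and the paper takes a genuinely different route precisely because of this.

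The Taylor expansion \eqref{Mu} buys $r$-flatness of order $N+1$ at the cost of the factor $[\eta]^{N+1}$. In the proof of Proposition~\ref{equiv} this leads (as written there) to a symbol of order $\mu+N+1$ and hence to continuity $H^{\tilde s}\to{\Wcal}^{\tilde s-(\mu+N+1)}$; the loss of $\mu+N+1$ derivatives in $y$ is harmless only because the target there is ${\Wcal}^{\y}$. Here the target is ${\Wcal}^s$ for \emph{finite} $s$, and your scheme would place $R_{\zeta}$ only in ${\Wcal}^{s-(\mu+N+1)}\big({\R}^q,{\Kcal}^{\y,\gamma}_{\Theta}(X^{\wedge})\big)$, not in ${\Wcal}^s$. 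You acknowledge that ``the order of the resulting symbol is such that ${\Op}_y$ maps $H^s$ into ${\Wcal}^s$'' must be checked, but the Taylor mechanism cannot deliver this: the flatness gain and the $[\eta]$-power loss are linked, and taking the projective limit over the conical index $j$ does nothing for the $y$-Sobolev order.

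What the paper does instead is to abandon Taylor entirely and show directly that the difference is an order-$0$ symbol. One expands $\zeta=\sum_j\lambda_j\zeta_jv_j$ with $\zeta_j\in{\Acal}'(K)$, $v_j\in H^s({\R}^q)$, and sets
$d_j(\eta)=[\eta]^{(n+1)/2}\omega(r)\big(1-\omega(r[\eta])\big)\la\zeta_{j,z},(r[\eta])^{-z}\ra$.
The point is that after applying $\kappa^{-1}_{[\eta]}$ one obtains $\omega(r[\eta]^{-1})\big(1-\omega(r)\big)\la\zeta_{j,z},r^{-z}\ra$, and the factor $1-\omega(r)$ keeps this supported away from $r=0$, where $\la\zeta_{j,z},r^{-z}\ra$ is harmless; this gives a uniform bound in ${\Kcal}^{s,\beta}({\R}_+)$ for \emph{every} $\beta$, hence $d_j\in S^0_{\clw}\big({\R}^q;\C,{\Kcal}^{\y,\beta}({\R}_+)\big)$ with constants tending to $0$ as $\zeta_j\to 0$. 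Order zero means ${\Op}_y(d_j):H^s\to{\Wcal}^s$ with no loss, and summing via \eqref{NO} gives the result. The moral: the difference $\omega(r)\big(1-\omega(r[\eta])\big)$ need not be made flat at $r=0$; after the twist $\kappa^{-1}_{[\eta]}$ it becomes $\omega(r/[\eta])\big(1-\omega(r)\big)$, which is supported in a fixed region bounded away from $0$, and that is what yields order zero.
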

\begin{proof}
Let us first drop $C^{\y}(X^{\wedge})$; this can be tensor-multiplied to the result in the final step, cf. the considerations in connection with \eqref{cont3}. For $\zeta$ we then  have
$$
\zeta\in {\Acal}'\big(K,H^s({\R}^q_{y'})\big)={\Acal}'(K){\hat{\otimes}}_{\pi}H^s({\R}^q_{y'}).
$$
We employ the fact that $\zeta$ can be written as a convergent sum
$\zeta=\sum_{j=0}^{\y}{\lambda}_j\zeta_jv_j$
for $\lambda_j\in\C, \sum_{j=0}^{\y}|\lambda_j|<\y, \zeta_j\in{\Acal}'(K), v_j\in H^s({\R}^q)$, tending to $0$ in the respective spaces, as $j\to\y$. Then, we form
$$
k_j(\eta):c\to \omega(r[\eta])[\eta]^{(n+1)/2}\la\zeta_{j,z},(r[\eta])^{-z}\ra c,
$$
$$
l_j(\eta):c\to \omega(r)[\eta]^{(n+1)/2}\la\zeta_{j,z},(r[\eta])^{-z}\ra c,
$$
$c\in \C$ and write
$$
d_j(\eta):=l_j(\eta)-k_j(\eta)=[\eta]^{(n+1)/2}\omega(r)\big(1-\omega(r[\eta])\big)\la\zeta_{j,z},(r[\eta])^{-z}\ra.
$$
We will show that
\beq\label{d}
d_j(\eta)\in S_{\clw}^0\big({\R}^q;\C,{\Kcal}^{\y,\beta}(\R_+)\big)
\eeq
for every $\beta\in\R$ and that $d_j(\eta)\to 0$ in that symbol spaces as $j\to\y$. This will give us
$$
{\Op}_y(d_j):H^s({\R}^q)\to {\Wcal}^s\big({\R}^q,{\Kcal}^{\y,\beta}(\R_+)\big).
$$
For fixed $v\in H^s({\R}^q)$ we can interpret
${\Op}_y(d_j)v={\Op}_y(l_j)v-{\Op}_y(k_j)v$ as
$$
F^{-1}_{\eta\to y}\big[ [\eta]^{(n+1)/2}\omega(r)\la\zeta_{j,z}{\hat{v}}(\eta),(r[\eta])^{-z}\ra\big]
-F^{-1}_{\eta\to y}\big[[\eta]^{(n+1)/2}\omega(r[\eta])\la\zeta_{j,z}{\hat{v}}(\eta),(r[\eta])^{-z}\ra\big],
$$
i.e., the difference between the respective singular functions for $\omega(r)$ and $\omega(r[\eta])$, respectively.

Let us now turn to \eqref{d} and set for the moment
$$
d(\eta)=[\eta]^{(n+1)/2}\omega(r)\big(1-\omega(r[\eta])\big)\la\zeta_z,(r[\eta])^{-z}\ra,
$$
i.e., we first drop subscript $j$. In order to show that
$d(\eta)\in S^0_{\clw}\big({\R}^q;\C,{\Kcal}^{\y,\beta}(\R_+)\big)$ we check the symbolic estimates
\beq\label{tt}
\|{\kappa}^{-1}_{[\eta]}D_{\eta}^{\delta}d(\eta)\|_{{\Lcal}(\C,{\Kcal}^{s,\beta}(\R_+))}=
\|{\kappa}^{-1}_{[\eta]}D_{\eta}^{\delta}d(\eta)\|_{{\Kcal}^{s,\beta}(\R_+)}\le c[\eta]^{-|\delta|},
\eeq
$\delta\in{\N}^q$, cf. the relation \eqref{est1}. It suffices to do that for every $s\in\N$, and we first consider the case $s=0$ and $\beta=0$.
Let $k^{\beta}(r)\in C_0^{\y}(\R_+)$ be any function that is strictly positive and
$k^{\beta}(r)=r^{\beta}$ for $0<r<c_0$, $k^{\beta}(r)=1$ for $r>c_1$, for some $0<c_0<c_1$.
Then ${\Kcal}^{s,\beta}(X^{\wedge})=k^{\beta}(r){\Kcal}^{s,0}(X^{\wedge})$.
In particular, by virtue of ${\Kcal}^{0,0}(\R_+)=L^2(\R_+)$ we have ${\Kcal}^{0,\beta}(\R_+)=k^{\beta}(r)L^2(\R_+)$ and
$$
\|f\|_{{\Kcal}^{0,\beta}(\R_+)}=\|k^{-\beta}f\|_{L^2(\R_+)}.
$$
In connection with \eqref{tt} we have to consider
$$
\|{\kappa}^{-1}_{[\eta]}d(\eta)\|_{{\Kcal}^{0,\beta}(\R_+)}=\|k^{-\beta}(r)\omega(r[\eta]^{-1})\big(1-\omega(r)\big)
\la\zeta_z,r^{-z}\ra\|_{L^2(\R_+)}.
$$
From the carrier of $\zeta$ we know that $\omega(r[\eta]^{-1})\la\zeta_z,r^{-z}\ra\in{\Kcal}^{\y,\gamma-n/2}(\R_+)$ for all $\zeta$; together with the factor $k^{-\beta}(r)\big(1-\omega(r)\big)$ we get $k^{-\beta}(r)\big(1-\omega(r)\big)\omega(r[\eta]^{-1})\la\zeta_z,r^{-z}\ra\in L^2(\R_+)$.
It follows that
$\|{\kappa}^{-1}_{[\eta]}d(\eta)\|_{{\Kcal}^{0,\beta}(\R_+)}\le c$
for all $\eta\in{\R}^q$. For the $\eta$-derivatives we obtain \eqref{tt} in general. Let us check, for instance, the case $\delta=(1,0,\ldots, 0)$, i.e., $D_{\eta}^{\delta}=-i\partial_{{\eta}_1}$. In this case we have
\begin{equation*}
\begin{split}
&\partial_{{\eta}_1}d(\eta)=(\partial_{{\eta}_1}[\eta]^{(n+1)/2})\omega(r)\big(1-\omega(r[\eta])\big)\la\zeta_z,(r[\eta])^{-z}\ra-
r[\eta]^{(n+1)/2}(\partial_{{\eta}_1}[\eta])
\\
&
\omega(r){\omega}'(r[\eta])\la\zeta_z,(r[\eta])^{-z}\ra-
[\eta]^{(n+1)/2}\omega(r)\big(1-\omega(r[\eta])\big)\la\zeta_z,z[\eta]^{-1}(\partial_{{\eta}_1}[\eta])(r[\eta])^{-z}\ra.
\end{split}
\end{equation*}
This gives us the desired estimate with $[\eta]^{-1}$ on the right. The general case may easily be treated in a similar manner. Now an elementary consideration shows that the constants $c=c(\zeta)$ in the symbolic estimates \eqref{tt} tend to $0$ as $\zeta\to 0$ in ${\Acal}'(K)$. Moreover, we can easily treat the case ${\Kcal}^{s,\beta}(\R_+)$ rather than ${\Kcal}^{0,\beta}(\R_+), s\in\N$. This implies the asserted estimates for all $s\in\R$. In other words, as claimed above, $d_j(\eta)=l_j(\eta)-k_j(\eta)$ tends to $0$ in $S_{\clw}^0\big({\R}^q;\C,{\Kcal}^{\y,\beta}(\R_+)\big)$ as $j\to\y$.

Now we characterize the difference between the singular terms defined with $\omega(r)$ and $\omega(r[\eta])$, respectively. It is equal to

\begin{equation*}
\begin{split}
&F^{-1}_{\eta\to y}\omega(r)\big[\kappa_{[\eta]}F_{y'\to\eta}\la\zeta(y')_z,r^{-z}\ra\big]-
F^{-1}_{\eta\to y}\omega(r[\eta])\big[\kappa_{[\eta]}F_{y'\to\eta}\la\zeta(y')_z,r^{-z}\ra\big]\\
=&F^{-1}_{\eta\to y}\omega(r)\big(1-\omega(r[\eta]\big)\big[\kappa_{[\eta]}F_{y'\to\eta}\la\zeta(y')_z,r^{-z}\ra\big]\\
=&F^{-1}_{\eta\to y}\omega(r)\big(1-\omega(r[\eta])\big)\kappa_{[\eta]}F_{y'\to\eta}\big\la\sum_{j=0}^{\y}{\lambda}_j\zeta_{j,z}v_j(y'),r^{-z}\big\ra\\
=&\sum_{j=0}^{\y}{\lambda}_jF^{-1}_{\eta\to y}\omega(r)\big(1-\omega(r[\eta])\big)\la\zeta_{j,z},(r[\eta])^{-z}\ra{\hat{v}}_j(\eta)=\sum_{j=0}^{\y}\lambda_j{\Op}_y(d_j)v_j.
\end{split}
\end{equation*}
This sum converges in ${\Wcal}^s\big({\R}^q,{\Kcal}^{\y,\beta}(\R_+)\big)$.

In fact, for every $t\ge 0$ we have
\begin{equation}\label{Su}
\begin{split}
\big\|\sum_{j=0}^{\y}\lambda_j{\Op}_y(d_j)v_j&\big\|_{{\Wcal}^s({\R}^q,{\Kcal}^{t,\beta}(\R_+))}\le \sum_{j=0}^{\y}|\lambda_j|\,\|{\Op}_y(d_j)v_j\|_{{\Wcal}^s({\R}^q,{\Kcal}^{t,\beta}(\R_+))}\\
\le\sum_{j=0}^{\y}|\lambda_j|\,&\|{\Op}_y(d_j)\|_{{\Lcal}(H^s({\R}^q),{\Wcal}^{s}({\R}^q,{\Kcal}^{t,\beta}(\R_+)))}\|v_j\|_{H^s({\R}^q)}.
\end{split}
\end{equation}
By virtue of \eqref{NO} we have
$$
\|{\Op}_y(d_j)\|_{{\Lcal}(H^s({\R}^q),{\Wcal}^{s}({\R}^q,{\Kcal}^{t,\beta}(\R_+)))}\to 0
$$
as $j\to\y$. Then $v_j\to 0$ in $H^s({\R}^q)$ as $j\to\y$ shows the convergence of the right hand side of \eqref{Su} for every $t\ge 0$, and hence it follows that
$$
\sum_{j=0}^{\y}\lambda_j{\Op}_y(d_j)v_j\in {\Wcal}^s\big({\R}^q,{\Kcal}^{\y,\beta}(\R_+)\big).
$$
So far we considered the case without $C^{\y}(X)$. However, as illustrated at the beginning, a tensor product argument gives us the result in general.
\end{proof}

Let us finally discuss to what extent the singular functions of variable branching or continuous edge asymptotics depend on the specific choice of the function $\eta\to[\eta]$. The other ``non-classical'' ingredient, namely, the cut-off function $\omega$ has been considered before. After  Proposition \ref{aa} it is clear that changing $\omega$ only causes a flat remainder. If we replace $[\eta]$ by an $[\eta]_1$ of analogous properties we obtain smoothing remainders with asymptotics. More precisely we have the following behaviour.
\begin{rem}\label{diff}
For any $\zeta\in{\Acal}'\big(K,H^s\big({\R}^q_{y'},C^{\y}(X)\big)\big), K\st\{\reb\,z<(n+1)/2-\gamma\}$, the difference
\beq\label{cp}
\omega(r)F^{-1}_{\eta\to y}\kappa_{[\eta]}\la{\hat{\zeta}}_z,r^{-z}\ra-\omega(r)F^{-1}_{\eta\to y}{\kappa}_{[\eta]_1}\la{\hat{\zeta}}_z,r^{-z}\ra
\eeq
belongs to $\in{\Wcal}^{\y}\big(\R^q,{\Kcal}_{\Ccal}^{\y,\gamma}(X^{\wedge})\big)$,
cf. the notation \eqref{cros}.
\end{rem}

In fact, \eqref{cp} has compact support in $\eta\in\R^q$. We have
\beq\label{dd}
\begin{split}
[\eta]^{(n+1)/2}&\la\hat{\zeta}_z,(r[\eta])^{-z}\ra-[\eta]_1^{(n+1)/2}\la\hat{\zeta}_z,(r[\eta]_1)^{-z}\ra
=[\eta]^{(n+1)/2}\frac{[\eta]^{(n+1)/2}-[\eta]_1^{(n+1)/2}}{[\eta]^{(n+1)/2}}\\
&\la\hat{\zeta}_z,(r[\eta])^{-z}\ra+[\eta]^{(n+1)/2}
\Big(\frac{[\eta]_1}{[\eta]}\Big)^{(n+1)/2}\la\hat{\zeta}_z,(r[\eta])^{-z}\frac{[\eta]^{-z}-
{[\eta]_1}^{-z}}{[\eta]^{-z}}\ra.
\end{split}
\eeq
For the first summand we employ that
$$
\frac{[\eta]^{(n+1)/2}-[\eta]_1^{(n+1)/2}}{[\eta]^{(n+1)/2}}\hat{\zeta}
=:\hat{\nu}\in{\Acal}'\big(K,{\hat{H}}^{\y}\big({\R}^q_{\eta},C^{\y}(X)\big)\big)
$$
since $[\eta]=[\eta]_1$, for large $|\eta|$. Moreover, we have
$$
\Big(\frac{[\eta]_1}{[\eta]}\Big)^{(n+1)/2}\frac{[\eta]^{-z}-{[\eta]_1}^{-z}}{[\eta]^{-z}}\hat{\zeta}
=:\hat{\sigma}\in{\Acal}'\big(K,{\hat{H}}^{\y}\big({\R}^q_{\eta},C^{\y}(X)\big)\big).
$$
Thus \eqref{dd} is equal to $[\eta]^{(n+1)/2}\la(\hat{\nu}+\hat{\sigma})_z,(r[\eta])^{-z}\ra$ and hence \eqref{cp} is equal to
$$
F^{-1}_{\eta\to y}[\eta]^{(n+1)/2}\la(\hat{\nu}+\hat{\sigma})_z,(r[\eta])^{-z}\ra
$$
which belongs to ${\Wcal}^{\y}\big(\R^q,{\Kcal}_{\Ccal}^{\y,\gamma}(X^{\wedge})\big)$.

\subsection{The Sobolev regularity of coefficients in branching edge asymptotics} 

Our next objective is to consider singular functions of continuous edge asymptotics, described in terms of smooth functions on $y\in\Omega$ with compact support with values in  ${\Acal}'\big(K,{\hat{H}}^s\big({\R}^q_{\eta},C^{\y}(X)\big)\big)$. We show that those functions may be represented by functionals without dependence on $y$. A similar result has been formulated in \cite[Proposition 3.1.35]{Schu20},  but here we give an alternative proof, and we obtain more information.  For convenience we start with Schwartz functions in $y\in{\R}^q$ which covers the case of functions with compact support in $y\in\Omega$. In addition we always write $\omega(r)$ rather than $\omega(r[\eta])$ which is admitted for similar reasons as in Proposition \ref{aa}, modulo flat remainders.

\begin{thm}\label{reg}
Let  ${\hat{\zeta}}(y,\eta)\in \Scal\big({\R}^q,{\Acal}'\big(K,{\hat{H}}^s\big({\R}^q_{\eta},C^{\y}(X)\big)\big)\big)$, $K\st\{(n+1)/2-\gamma+\vartheta<\reb\,z<(n+1)/2-\gamma\}$ compact, and form
\beq\label{ff}
f(r,y):=F^{-1}_{\eta\to y}\{[\eta]^{(n+1)/2}\omega(r)\la{\hat{\zeta}}(y,\eta)_z,(r[\eta])^{-z}\ra\}
\eeq
$($the dependence on $x\in X$ is dropped in the notation$)$. Then there is a unique ${\hat{\chi}}\in {\Acal}'\big(K,{\hat{H}}^s\big({\R}^q_{\eta},C^{\y}(X)\big)\big)$ such that
\beq\label{Uni}
f(r,y):=F^{-1}_{\eta\to y}\{[\eta]^{(n+1)/2}\omega(r)\la{\hat{\chi}}(\eta)_z,(r[\eta])^{-z}\ra\},
\eeq
and the correspondence $\hat{\zeta}\to\hat{\chi}$ defines an operator
\beq\label{abb}
B:{\Scal}\big({\R}^q, {\Acal}'\big(K,{\hat{H}}^s\big({\R}^q_{\eta},C^{\y}(X)\big)\big)\big)\to{\Acal}'\big(K,{\hat{H}}^s\big({\R}^q_{\eta},C^{\y}(X)\big)\big).
\eeq
\end{thm}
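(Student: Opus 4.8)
The plan is to reduce everything to a statement about the family of analytic functionals $\hat\zeta(y,\eta)$ and then to ``integrate out'' the $y$-dependence by passing to Fourier variables. First I would observe that, since $f(r,y)$ is obtained by applying $F^{-1}_{\eta\to y}$ to $[\eta]^{(n+1)/2}\omega(r)\la\hat\zeta(y,\eta)_z,(r[\eta])^{-z}\ra$, it is natural to pass to the Fourier transform in $y$: writing $\tilde f(r,\eta):=F_{y\to\eta}f(r,y)$ we get a convolution in $\eta$ coming from the $y$-dependence of $\hat\zeta$. Concretely, if $\hat\zeta(y,\eta)=F^{-1}_{\xi\to y}\hat{\hat\zeta}(\xi,\eta)$ then $\tilde f(r,\eta)$ equals $[\eta]^{(n+1)/2}\omega(r)$ paired against $\int \hat{\hat\zeta}(\eta-\xi,\xi)(r[\xi])^{-z}\,\dbar\xi$ or a similar oscillatory expression. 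The point is that the desired $\hat\chi$ must satisfy $[\eta]^{(n+1)/2}\la\hat\chi(\eta)_z,(r[\eta])^{-z}\ra=\tilde f(r,\eta)$, so $\hat\chi$ is forced: one has to define $\la\hat\chi(\eta)_z,h\ra$ by the contour integral $\int_C \tilde g(z,\eta)h(z)\,\dbar z$ where $\tilde g(z,\eta):=M_{r\to z}\big([\eta]^{-(n+1)/2}\omega(r)^{-1}\tilde f(r,\eta)\big)$ suitably interpreted (using that $\omega\equiv 1$ near $r=0$, so the Mellin transform sees only the singular part). This simultaneously gives existence and uniqueness of $\hat\chi$ once one checks it lands in the right space.

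The main work is therefore two-fold. First, \emph{uniqueness}: if $[\eta]^{(n+1)/2}\la\hat\chi_1(\eta)_z,(r[\eta])^{-z}\ra$ and the same with $\hat\chi_2$ agree as functions of $(r,\eta)$ modulo nothing (or modulo flat terms, which is the real statement after the $\omega(r)$-normalization of the preamble), then $\la(\hat\chi_1-\hat\chi_2)(\eta)_z,(r[\eta])^{-z}\ra\equiv 0$; since substituting $h(z)=r^{-z}$, i.e. testing against all powers $(r[\eta])^{-z}$, recovers the analytic functional (the Mellin transform of $\omega(r)\la\delta_z,r^{-z}\ra$ determines $\delta$ up to an entire function, hence $\delta$ itself as an element of $\Acal'(K)$), we get $\hat\chi_1=\hat\chi_2$. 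This uses the identity $M_{r\to z}(\omega(r)r^{-p}\log^k r)=(-1)^k k!/(z-p)^{k+1}$ modulo entire functions, exactly as in Subsection 1.2, now with a parameter $\eta$. Second, \emph{the mapping property} \eqref{abb}: I must show $\hat\chi\in\Acal'\big(K,\hat H^s({\R}^q_\eta,C^\y(X))\big)$, i.e. that after the change of variables $z\mapsto z$, $\eta\mapsto[\eta]^{-1}\eta$ the $\eta$-weighted $H^s$-norms behave correctly. The clean way is: $\Acal'(K,\hat H^s(\R^q_\eta,C^\y(X)))=\Acal'(K)\hat\otimes_\pi\hat H^s(\R^q_\eta)\hat\otimes_\pi C^\y(X)$ by nuclearity, so expand $\hat\zeta(y,\cdot)$ via a convergent nuclear series $\sum\lambda_j\,a_j(y)\,\zeta_j\,v_j$ with $a_j\in\Scal(\R^q)$, $\zeta_j\in\Acal'(K)$, $v_j\in\hat H^s(\R^q_\eta)$, $v_j\to0$, and compute $\hat\chi$ term by term; for a single term the passage from $a_j(y)\zeta_j v_j(\eta)$ to the corresponding $\hat\chi_j(\eta)$ amounts to the operator $v_j(\eta)\mapsto$ (something like $\int a_j(\eta-\xi)v_j(\xi)\cdot(\text{weight in }[\xi]/[\eta])^{\,\cdot}\dbar\xi$ paired with the $(r[\cdot])^{-z}$-factor), and Schwartz decay of $a_j$ together with the twisted symbolic estimates from Section 1 (essentially \eqref{NO} and the estimates in the proof of Proposition \ref{aa}, where $\omega(r[\eta])$ versus $\omega(r)$ was handled) shows this operator is bounded on $\hat H^s(\R^q_\eta)$ and the series converges in $\Acal'(K,\hat H^s(\R^q_\eta,C^\y(X)))$.

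The hard part will be the bookkeeping in that last step: controlling the $\eta$-weights $[\eta]^{(n+1)/2}$, $[\eta]^{-z}$ and the group action $\kappa_{[\eta]}$ simultaneously under the convolution coming from $y$-dependence, i.e. showing that the ``twisted'' norm $\la\eta\ra^{2s}\|\kappa^{-1}_{[\eta]}(\,\cdot\,)\|^2$ that defines $\Wcal^s$ translates, after Mellin transform and the $z\mapsto(r[\eta])^{-z}$ pairing, into an honest bound on $\hat\chi(\eta)$ in $\hat H^s\hat\otimes_\pi\Acal'(K)$ uniformly in the carrier $K$. Here I expect to lean on the fact (used repeatedly in the excerpt) that $[\eta]^M\hat H^\y(\R^q_\eta,C^\y(X))=\hat H^\y(\R^q_\eta,C^\y(X))$ and, for finite $s$, that multiplication by $[\eta]^{\reb z}$ with $\reb z$ ranging in the compact set $\{\reb z:z\in K\}$ is a bounded operation on $\hat H^s$ up to a fixed shift — so the branching/variable location of $K$ only shifts the weight exponent by a bounded amount. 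Modulo that estimate, existence follows by defining $\hat\chi$ through the contour-integral formula above and checking it reproduces $f$ by \eqref{pz}, and uniqueness follows as in the second paragraph; linearity and continuity of $B$ are then immediate from the construction.
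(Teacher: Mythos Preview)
Your uniqueness argument is essentially the paper's: Fourier-transform in $y$, Mellin-transform in $r$, and recover $\hat\chi$ by a contour integral around $K$. That part is fine.

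For existence the paper takes a genuinely different route. Instead of computing the convolution in $\eta$ that arises when you Fourier-transform the $y$-dependent expression directly, the paper reads $f$ as a pseudo-differential action $f=\sum_j\lambda_j\,{\Op}_y(k_{j,\textup{L}})v_j$ with left symbols $k_{j,\textup{L}}(y,\eta)$ (after the same nuclear expansion you propose), and then applies Kumano-go's left-to-right conversion $k_{j,\textup{L}}(y,\eta)\mapsto k_{j,\textup{R}}(y',\eta)=k_{j,\textup{L}}(y',\eta)+r_{j,\textup{R}}(y',\eta)$ with the oscillatory-integral remainder formula. The point of passing to right symbols is that $F_{y\to\eta}\big({\Op}_y(a_{\textup{R}})v\big)(\eta)$ equals the Fourier transform of $y'\mapsto a_{\textup{R}}(y',\eta)v(y')$ at $\eta$: the $y'$-dependence combines with $v$ by \emph{pointwise multiplication}, not convolution, so the result is automatically in $\hat H^s$. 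The remainder $r_{j,\textup{R}}$ gains one symbol order, so its contribution $\hat\chi_{\textup{rest}}$ even lands in $\hat H^{s+1}$. Continuity of $B$ then comes for free from the known continuity of $a_{\textup{L}}\mapsto a_{\textup{R}}$ in the Schwartz--symbol topology.

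Your direct-convolution approach can be made to work, but the ``hard part'' you flag needs a concrete estimate you do not name: Peetre's inequality $[\eta]^a\le c\,\la\xi-\eta\ra^{|a|}[\xi]^a$, which together with the rapid decay of $\hat a_j\in\Scal$ controls
\[
\xi\mapsto\int \hat a_j(\xi-\eta)\,\big([\eta]/[\xi]\big)^{(n+1)/2-z}\,\hat v_j(\eta)\,\dbar\eta
\]
in $\hat H^s(\R^q_\xi)$ uniformly for $z$ in a neighbourhood of $K$ (this is what makes the result an element of ${\Acal}'(K,\hat H^s)$). The facts you plan to lean on---$[\eta]^M\hat H^{\y}=\hat H^{\y}$ and ``multiplication by $[\eta]^{\reb\,z}$ shifts $s$''---are not the right tools at finite $s$ under convolution; the first is an $s=\y$ statement, and the second controls multiplication, not the mixed $(\xi,\eta)$-weight inside the integral. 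With Peetre in hand your route is a bit more elementary; the paper's route buys that estimate for free by importing the standard mapping properties of Kumano-go's calculus.
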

\begin{proof}
We employ some background on the pseudo-differential calculus with opera\-tor-valued symbols of the kind $S_{\cl}^{\mu}(\Omega\times{\R}^q;H,\tilde{H})$ with twisted symbolic estimates \eqref{ests}.
 In our case we set $\Omega={\R}^q$ and look at the subspace
${\Scal}\big({\R}^q_y,S_{\cl}^{\mu}({\R}^q_{\eta};H,\tilde{H})\big)$.
Given an $a_{\textup{L}}(y,\eta)$ in that space we have (by notation) the situation of a left symbol in the calculus of pseudo-differential operators ${\Op}_y(a_{\textup{L}})$, cf. the expression \eqref{Osc} where the respective amplitude function is a double symbol. It will be necessary to generate right symbols $a_{\textup{R}}(y',\eta)$ such that
\beq\label{Id}
 {\Op}_y(a_{\textup{L}})={\Op}_y(a_{\textup{R}}).
\eeq
A modification of the Kumano-go's global (in $\R^q$) pseudo-differential calculus is that
$a_{\textup{L}}\to a_{\textup{R}}$ with \eqref{Id} defines continuous operator
$$
{\Scal}\big({\R}^q_y,S_{\cl}^{\mu}({\R}^q_{\eta};H,\tilde{H})\big)\to {\Scal}\big({\R}^q_{y'},S_{\cl}^{\mu}({\R}^q_{\eta};H,\tilde{H})\big).
$$
Using an expansion for $a_{\textup{R}}$ with remainder we have, in particular,
\beq\label{right}
a_{\textup{R}}(y',\eta)=a_{\textup{L}}(y', \eta)+r_{\textup{R}}(y',\eta)
\eeq
for
\beq\label{rest}
r_{\textup{R}}(y',\eta)=-\sum_{|\alpha|=1}\int_0^1\!\!\!\int\!\!\!\!\int e^{-ix\xi}(D_y^{\alpha}\partial_{\eta}^{\alpha}a)(y'+x,\eta-t\xi)\,dx\dbar \xi dt.
\eeq
Here $\partial_{\eta}^{\alpha}=\partial_{\eta_1}^{\alpha_1}\ldots\partial_{\eta_q}^{\alpha^q}$  and $D_y^{\alpha}=(-i)^{|\alpha|}\partial_{y}^{\alpha}$ for $\alpha=(\alpha_1,\ldots,\alpha_q),|\alpha|=\alpha_1+\ldots+\alpha_q$.
The map $a_{\textup{L}}(y,\eta)\to r_{\textup{R}}(y',\eta)$ defines a continuous operator
\beq\label{cc}
{\Scal}\big({\R}^q_y,S_{\cl}^{\mu}({\R}^q_{\eta};H,\tilde{H})\big)\to {\Scal}\big({\R}^q_{y'},S_{\cl}^{\mu-1}({\R}^q_{\eta};H,\tilde{H})\big).
\eeq
In our concrete situation similarly as before we first look at the case without $C^{\y}(X)$; then a tensor product consideration gives us the result in general. We express $\hat{\zeta}(y,\eta)\in {\Scal}\big(\R^q,{\Acal}'\big(K,{\hat{H}}^s(\R_{\eta}^q)\big)\big)$ as an expansion
$$
\hat{\zeta}(y,\eta)=\sum_{j=0}^{\y}\lambda_j\zeta_j\varphi_j(y){\hat{v}}_j(\eta)
$$
for $\lambda_j\in\C, \sum_{j=0}^{\y}|\lambda_j|<\y,\varphi_j\in{\Scal}({\R}^q_{y}), v_j\in H^s({\R}^q_{y'})$, tending to zero in the respective spaces. This allows us to write the function \eqref{ff} in the form
$$
f(r,y)=\sum_{j=0}^{\y}\lambda_jF^{-1}_{\eta\to y}\{[\eta]^{(n+1)/2}\omega(r)\varphi_j(y)\la\zeta_{j,z},(r[\eta])^{-z}\ra{\hat{v}}_j(\eta)\}=\sum_{j=0}^{\y}\lambda_j{\Op}_y(k_j)v_j
$$
where $k_j(y,\eta)\in {\Scal}\big(\R_y^q,S^{\mu}({\R}^q_{\eta};\C,{\tilde{H}}_l)\big)$
is defined by
$$
k_j(y,\eta):c\to [\eta]^{(n+1)/2}\omega(r)\varphi_j(y)\la\zeta_{j,z},(r[\eta])^{-z}\ra c,
$$
and ${\tilde{H}}_l, l\in\N$, is a scale of Hilbert spaces with $\kappa$-action such that
$$
{\Kcal}_{\Ccal}^{\y,\gamma-n/2}(\R_+)=\lim_{\overleftarrow{l\in\N}}\tilde{H}_l,
$$
cf. equation \eqref{cros}. In other words we apply the above general relations on symbols to the case $H:=\C$ with the trivial group action and $\tilde{H}={\tilde{H}}_l$ endowed with $\kappa$, for every fixed $l$. Writing for the moment $k_j(y,\eta)=k_{j,\textup{L}}(y,\eta)$ we obtain a right symbol $k_{j,\textup{R}}(y',\eta)$ which is of the form
$$
k_{j,\textup{R}}(y',\eta)c=[\eta]^{(n+1)/2}\omega(r)\varphi_j(y')\la\zeta_{j,z},(r[\eta])^{-z}\ra c+r_{j,\textup{R}}(y',\eta)c,
$$
where $r_{j,\textup{R}}$ is obtained from \eqref{rest} for $a_{\textup{L}}=k_{j,\textup{L}}$. Let us consider for the moment the case $q=1$, and then write $y=y_1, \eta=\eta_1$. The general case is completely analogous. Later on in the function and symbol spaces we tacitly return again to $\R^q$ rather than $\R^1$. Then the remainder expression takes the form
\begin{equation*}
\begin{split}
r_{j,\textup{R}}(y',\eta)=-\int_0^1\!\!\!\int\!\!\!\!\int e^{-ix\xi}r^{-(n+1)/2}\omega(r)&(D_y\varphi_j)(y'+x)\\ &\la\zeta_{j,z},\big(\partial_{\eta}((r[\eta])^{-z+(n+1)/2})\big)|_{\eta-t\xi}\ra\,dx\dbar \xi dt.
\end{split}
\end{equation*}

\nt We now apply an element of Kumano-go's calculus for scalar symbols and observe that
$$
d_j(z,y',\eta)=\int_0^1\!\!\!\int\!\!\!\!\int e^{-ix\xi}(D_y\varphi_j)(y'+x)
\big(\partial_{\eta}([\eta]^{-z+(n+1)/2})\big)|_{\eta-t\xi}\,dx\dbar \xi dt
$$
belongs to ${\Scal}\big(\R_{y'}, S^{-\reb\,z +(n+1)/2-1}(\R_{\eta})\big)$ for every fixed $z$. In addition $d_j(z,y',\eta)$ is an entire function in $z$. This gives us
\beq\label{DD}
\begin{split}
r_{j,\textup{R}}(y',\eta)=&-r^{-(n+1)/2}\omega(r)\la\zeta_{j,z},r^{-z+(n+1)/2}d_j(z,y',\eta)\ra=\\
&-r^{-(n+1)/2+1}\omega(r)\la{\hat{\delta}}_{j,z}(y',\eta),(r[\eta])^{-z-1+(n+1)/2}\ra
\end{split}
\eeq
for ${\hat{\delta}}_j(y',\eta):=\zeta_jd_j(z,y',\eta)/[\eta]^{-z-1+(n+1)/2}$.
 We now employ the fact that the pseudo-differential action with a right symbol $b(y',\eta)$, say, in the scalar case $b(y',\eta)\in {\Scal}\big({\R}^q_{y'},S^{\nu}({\R}_{\eta}^q)\big)$ for some $\nu$, operating on $v\in H^s(\R^q)$ has the form
$$
{\Op}_y(b)v=\int e^{iy\eta}\left\{\int e^{-iy'\eta}b(y',\eta)v(y')\,dy'\right\}\dbar\eta.
$$
In order to analyze the expression we may apply a tensor product expansion
$$
b(y', \eta)=\sum_{l=0}^{\y}{\gamma}_l{\psi}_l(y')b_l(\eta)
$$
with $\sum_{l=0}^{\y}|{\gamma}_l|<\y, {\psi}_l\in{\Scal}({\R}^q), b_l\in S^{\nu}(\R^q)$, tending to zero in the considered spaces when $l\to\y$. Then
\begin{equation*}
\begin{split}
{\Op}_y(b)v=&\int e^{iy\eta}
\left\{\int e^{-iy'\eta}\sum_{l=0}^{\y}{\gamma}_l{\psi}_l(y')b_l(\eta)v(y')\,dy'\right\}\dbar\eta=\\
&\int e^{iy\eta}\sum_{l=0}^{\y}{\gamma}_lb_l(\eta){\widehat{\psi_lv}}(\eta)\,dy'\dbar\eta.
\end{split}
\end{equation*}
We have $\psi_lv\in H^s({\R}^q_{y'}), \psi_lv\to 0$ in $H^s({\R}^q_{y'})$, and we obtain altogether a sum
$$
{\Op}_y(b)v=\sum_{l=0}^{\y}{\gamma}_l{\Op}_y(b_l)(\psi_l v),
$$
convergent in $H^{s-\nu}({\R}^q)$. This  consideration may be modified for the present case.

\nt Let us write  \eqref{DD} as
\beq\label{rest1}
r_{j,\textup{R}}(y',\eta)=-[\eta]^{(n+1)/2-1}\omega(r)\la{\hat{\delta}}_{j,z}(y',\eta),(r[\eta])^{-z}\ra.
\eeq
We have
$$
{\hat{\delta}}_j(y',\eta)=\sum_{l=0}^{\y}{\gamma}_l\psi_l(y')b_{jl}(\eta),
$$
for ${\hat{\delta}}_j(y',\eta)\in {\Acal}'\big(K,{\Scal}\big({\R}^q_{y'},S^0_{\clw}({\R}^q_{\eta})\big)\big)$, where ${\hat{b}}_{jl}(\eta)\in{\Acal}'\big(K,S^0_{\clw}({\R}^q_{\eta})\big)$. We employ the fact that the pairing $S^0_{\clw}({\R}^q_{\eta})\times{\hat{H}}^s({\R}^q_{\eta})\to{\hat{H}}^s({\R}^q_{\eta})$
gives rise to a bilinear map
$$
\big({\id}_{{\Acal}'(K)}\otimes S^0_{\clw}({\R}^q_{\eta})\big)\times{\hat{H}}^s({\R}^q_{\eta})\to{\Acal}'(K){\hat{\otimes}}_{\pi}{\hat{H}}^s({\R}^q_{\eta}).
$$
It follows that
$$
r_{j,\textup{R}}(y',\eta)=-[\eta]^{(n+1)/2-1}\omega(r)\big\la\sum_{l=0}^{\y}{\gamma}_l\psi_l(y')b_{jl}(\eta),(r[\eta])^{-z}\big\ra
$$
and
$$
{\Op}_y(r_{j,\textup{R}})v_j(y)=F^{-1}_{\eta\to y}\big\{-[\eta]^{(n+1)/2-1}\omega(r)
\sum_{l=0}^{\y}{\gamma}_l\big\la b_{jl,z}(\eta),(r[\eta])^{-z}\big\ra\widehat{\psi_lv_j}(\eta)\big\}.
$$
For
\beq\label{chi}
{\hat{\chi}}_{j,\textup{rest}}(\eta):=\sum_{l=0}^{\y}{\gamma}_lb_{jl}(\eta)\widehat{\psi_lv_j}(\eta)\in{\Acal}'\big(K,{\hat{H}}^s(\R^q)\big)
\eeq
it follows that
$$
{\Op}_y(r_{j,\textup{R}})v_j(y)=F^{-1}_{\eta\to y}\{-[\eta]^{(n+1)/2-1}\omega(r)
\la{\hat{\chi}}_{j,z} (\eta),(r[\eta])^{-z}\ra\}.
$$
Returning to \eqref{right} from \eqref{rest1} we obtain
$$
r_{\textup{R}}(y',\eta)=-[\eta]^{(n+1)/2-1}\omega(r)\sum_{j=0}^{\y}{\lambda}_j\big\la{\hat{\delta}}_{j,z}(y',\eta),(r[\eta])^{-z}\big\ra
$$
and
$$
F^{-1}_{\eta\to y}\big(F_{y'\to\eta}r_{j,\textup{R}})(y',\eta)\big)=-F^{-1}_{\eta\to y}\big\{[\eta]^{(n+1)/2-1}\omega(r)\sum_{j=0}^{\y}{\lambda}_j\big\la{\hat{\chi}}_{j,z}(\eta),(r[\eta])^{-z}\big\ra\big\}.
$$
By notation we have
$k_{\textup{L}}(y,\eta)=\sum_{j=0}^{\y}{\lambda}_j k_{j,\textup{L}}(y,\eta)$
where $k_{j,\textup{L}}(y,\eta)\to 0$ in ${\Scal}\big({\R}^q_y,\newline S^0(\R^q;\C,{\tilde{H}}_l)\big)$ and then  $k_{j,\textup{R}}(y',\eta)\to 0$ in ${\Scal}\big({\R}^q_y,S^0(\R^q;\C,{\tilde{H}}_l)\big)$ and  $r_{j,\textup{R}}(y',\eta)\to 0$ in ${\Scal}\big({\R}^q_y,S^{-1}(\R^q;\C,{\tilde{H}}_l)\big)$ as $j\to\y$. This implies that
$k_{\textup{R}}(y',\eta)=\sum_{j=0}^{\y}{\lambda}_j k_{j,\textup{R}}(y',\eta)$.
 We obtain that $\hat{\chi}_{j,\textup{rest}}(\eta)\to 0$ in ${\Acal}'\big(K,{\hat{H}}^s(\R^q)\big)$ as $j\to\y$, cf. \eqref{chi},  hence it follows an element
$$
{\hat{\chi}}_{\textup{rest}}(\eta):=\sum_{j=0}^{\y}{\lambda}_j{\hat{\chi}}_{j,\textup{rest}}(\eta)\in{\Acal}'\big(K,{\hat{H}}^s(\R^q)\big).
$$
In a similar (simpler) manner we can treat the term $a_{\textup{L}}(y',\eta)$, cf. \eqref{right}, which gives us a ${\hat{\chi}}_{\textup{main}}\in{\Acal}'\big(K,{\hat{H}}^s(\R^q_{\eta})\big)$, and it follows altogether
\begin{equation*}
\begin{split}
f(r,y)=&F^{-1}_{\eta\to y}\{[\eta]^{(n+1)/2}\omega(r)\la{\hat{\chi}}_{\textup{main}}
(\eta)_z,(r[\eta])^{-z}\ra\}\\
-&F^{-1}_{\eta\to y}\{[\eta]^{(n+1)/2-1}\omega(r)\la{\hat{\chi}}_{\textup{rest}}
(\eta)_z,(r[\eta])^{-z}\ra\}.
\end{split}
\end{equation*}
Note that $[\eta]^{-1}{\hat{\chi}}_{\textup{rest}}\in{\Acal}'\big(K,{\hat{H}}^{s+1}(\R^q_{\eta})\big)
\hookrightarrow{\Acal}'\big(K,{\hat{H}}^s(\R^q_{\eta})\big)$.
 Analogous considerations apply for the $C^{\y}(X)$-valued case. We thus obtain the claimed representation \eqref{Uni}
where
$$
{\hat{\chi}}(\eta):={\hat{\chi}}_{\textup{main}}(\eta)-[\eta]^{-1}{\hat{\chi}}_{\textup{rest}}(\eta)\in
{\Acal}'\big(K,H^s\big(\R^q,C^{\y}(X)\big)\big).
$$
Let us now prove the uniqueness of $\hat{\chi}$ in the formula \eqref{Uni}. Without loss of generality we assume $K=K^{\textup{c}}$, cf. the relation \eqref{KC}. We have an isomorphism
$$
{\Acal}'(K,E)\cong\{\omega(r)\la\chi_z,r^{-z}\ra:\chi\in{\Acal}'(K,E)\}
$$
where on the right hand side we talk about functions in $C^{\y}(\R_+,E)$, and $\omega$ is a fixed cut-off function. Clearly we know much more about such functions; they belong to ${\Kcal}^{\y,\gamma}(\R_+,E)$ where $\gamma\in\R$ is any real such that $K\st\{\reb\,z<1/2-\gamma\}$. The notation ${\Kcal}^{\y,\gamma}(\R_+,E)$ is an $E$-valued generalization of the above-mentioned ${\Kcal}^{\y,\gamma}(\R_+)$. Up to a translation in the complex plane we may assume $\gamma=0$. Then the Mellin transform
$$
M_{r\to w}\big(\omega(r)\la\chi_z,r^{-z}\ra\big)=:m(w)
$$
gives us an element in $L^2(\Gamma_{1/2},E)$ which is holomorphic in $\C\setminus{K^{\textup{c}}}$, and we can recover $\chi$ by forming
$$
\chi:h\to\int_Cm(w)h(w)\,\dbar w,\quad h\in{\Acal}(\C)
$$
for any $C$ counter clockwise surrounding $K$.

 The multiplication of a $\chi\in{\Acal}'(K,E)$ by $g\in{\Acal}(\C)$, defined by $\la\chi,h\ra:=\la\chi,gh\ra$ gives us again an element in ${\Acal}'(K,E)$.
 Now looking at the expression \eqref{Uni} it suffices to recover
$$
\hat{\vartheta}(\eta):=[\eta]^{(n+1)/2}\hat{\chi}(\eta)\in{\Acal}'\big(K,{\hat{H}}^{s-(n+1)/2}\big({\R}^q_{\eta},C^{\y}(X)\big)\big)
$$
from
$$
F_{y\to\eta}(f)(r,\eta)=\omega(r)\la\hat{\vartheta}(\eta),(r[\eta])^{-z}\ra=\omega(r)\la[\eta]^{-z}\hat{\vartheta}(\eta),r^{-z}\ra
$$
the Mellin transform of which belongs to ${\Acal}\big(\C\setminus K,{\hat{H}}^{s-(n+1)/2}\big({\R}_{\eta}^q,C^{\y}(X)\big)\big)$ where
$$
[\eta]^{-w}\hat{\vartheta}(\eta):h\to\int_CM_{r\to w}\big(\omega(r)\la[\eta]^{-z}\hat{\vartheta}(\eta),r^{-z}\ra\big)h(w)\,\dbar w.
$$
Thus we find $[\eta]^{-w}\hat{\vartheta}(\eta)$ and hence $\hat{\vartheta}(\eta)$ itself by composing the result with the entire function $[\eta]^w$. In other words $\hat{\chi}$ in the formula \eqref{Uni} is unique.
\end{proof}

Let us now discuss the Sobolev regularity of coefficients in the singular functions of edge asymptotics. In order to illustrate what we mean we first look at constant discrete asymptotics of type $\Pcal$. According to Proposition \ref{aa} the singular functions are finite linear combinations of expressions
$$
\omega(r)F^{-1}_{\eta\to y}\{[\eta]^{(n+1)/2}(r[\eta])^{-p}\,{\log}^k(r[\eta]){\hat{v}}_{p,k}(\eta,x)\},
$$
for ${\hat{v}}_{p,k}(\eta,x)\in {\hat{H}}^s\big(\R^q,C^{\y}(X)\big), p\in{\pi}_{\C}{\Pcal}$, and some $k\in\N$, cf. the formulas \eqref{group}, \eqref{si} and \eqref{pz}.
 The $\eta$-dependence lies in
\beq\label{coef}
[\eta]^{(n+1)/2-p}\,{\log}^l[\eta]{\hat{v}}_{p,k}(\eta,x)=:{\hat{w}}_{p,k}(\eta,x)
\eeq
for some $0\le l\le k$, i.e.,
\beq\label{regass}
{w}_{p,k}(y,x)\in H^{s+\reb\,p-\varepsilon-(n+1)/2}\big({\R}^q_{y},C^{\y}(X)\big),
\eeq
for any $\varepsilon>0$.
The case of constant continuous asymptotics can be interpreted in terms of Sobolev regularity as well. Here in the representation as in Proposition \ref{aa} the analytic functional $\zeta$ is independent of $y'$. The meaning of the singular functions is a superposition of such functions with discrete asymptotics with exponents $r^{-z}$ for $z\in K$, and $\zeta$ is just the ``density'' of the superposition. Then, taking into account what we obtained in the constant discrete case the Sobolev regularity which is determined by the occurring $[\eta]$-powers together with the ${\hat{H}}^s\big({\R}^q_{\eta},C^{\y}(X)\big)$-valued character of $\hat{\zeta}$ is nothing else than
\beq\label{contreg}
\underset{z\in K}{\inf}\big(s+\reb\,z-\varepsilon-(n+1)/2\big)
\eeq
for any $\varepsilon>0$.

Let us now draw some conclusions of Theorem \ref{reg} on a way to approximate the singular functions of branching edge asymptotics by singular functions of continuous asymptotics belonging to  a decomposition of the considered compact set $K=\bigcup_{i=0}^NK_i$, where the $K_i$ are as in \eqref{Ki}. The decomposition \eqref{BE} may also be applied to the $E^s$-valued case, cf. \eqref{Es}, i.e., we can write \eqref{Del} in the form
\beq\label{decd}
{\hat{\delta}}(y,\eta)=\sum_{i=0}^N\varphi_i(y){\hat{\delta}}_i(y,\eta)
\eeq
for summands ${\hat{\delta}}_i(y,\eta)\in{\Scal}\big(\R^q, {\Acal}'(K_i,E^s)\big)^{\bullet}$ (the Schwartz function is taken for convenience; it does not affect the results). The space ${\Scal}\big(\R^q, {\Acal}'(K_i,E^s)\big)^{\bullet}$ is closed in ${\Scal}\big(\R^q, {\Acal}'(K_i,E^s)\big)$.
 Let $B_i$ denote the analogue of the operator $B$ in the Theorem \ref{reg} now referring to $K_i$, i.e.,
$B_i:{\Scal}\big(\R^q, {\Acal}'(K_i,E^s)\big)\to{\Acal}'(K_i,E^s)$.
Then, applying $B_i$ to ${\hat{\delta}}_i(y,\eta)\in{\Scal}\big(\R^q, {\Acal}'(K_i,E^s)\big)^{\bullet}$ we obtain an element
\beq\label{decd1}
\hat{\chi}(y,\eta):=\sum_{i=0}^N\varphi_i(y)B_i{\hat{\chi}}_i(y,\eta)
\eeq
which is now a kind of approximation of the branching pointwise discrete functional $\hat{\delta}(y,\eta)$ by ${\hat{\chi}}(y,\eta)$ which turns the asymptotics to a continuous behaviour over $K_i$ where $y$ varies over $U_i$. Since by Theorem \ref{reg} the singular functions associated with $\hat{\delta}(y,\eta)$ and $\hat{\chi}(y,\eta)$ remain the same, we obtain the following Sobolev regularity approximation of the coefficients in the singular functions of branching edge asymptotics.
\begin{cor}\label{Sobreg}
Consider the branching discrete functional $\hat{\delta}(y,\eta)$ and the associated singular functions $$
F^{-1}_{\eta\to y}\{[\eta]^{(n+1)/2}\omega(r)\la{\hat{\delta}}(y,\eta),(r[\eta])^{-z}\ra\}.
$$
Then according to \eqref{decd} we may replace $\hat{\delta}(y,\eta)$ by the finite sum \eqref{decd1}, and from \eqref{contreg} we obtain the Sobolev regularity in the edge variables $y\in U_i$, namely,
$$
\underset{z\in K_i}{\inf}\big(s+\reb\,z-\varepsilon-(n+1)/2\big)
$$
for any $\varepsilon>0, i=0,1,\ldots,N$.
 In other words the Sobolev regularity may be localized over $U_i$ for the corresponding $K_i$, and, of course, the diameters both of $U_i$ and $K_i$ may be chosen as small as we want when we choose $N$ sufficiently large.
\end{cor}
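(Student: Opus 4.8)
The plan is to deduce the statement by chaining three ingredients that are already available: the localized decomposition \eqref{decd} of the branching functional $\hat\delta$, the $y$-independent representation of Theorem \ref{reg} applied separately over each $K_i$, and the Sobolev count \eqref{contreg} for constant continuous asymptotics. Fix $b=(c,U)$ and take, as in Definition \ref{v.ast}, a covering $\{U_i\}_{0\le i\le N}$ of $\overline U$ and compacts $K_i\st\{c-\varepsilon_i<\reb\,z<(n+1)/2-\gamma\}$ with $\pi_{\C}\Pcal\cap\{c-\varepsilon_i<\reb\,z<(n+1)/2-\gamma\}\st K_i$ for $y\in U_i$, together with a subordinate partition of unity $\varphi_i\in C_0^{\y}(U_i)$, $\sum_i\varphi_i\equiv1$ on $\overline U$. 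By \eqref{decd} one has $\hat\delta(y,\eta)=\sum_{i=0}^N\varphi_i(y){\hat\delta}_i(y,\eta)$ with ${\hat\delta}_i\in{\Scal}\big(\R^q,{\Acal}'(K_i,E^s)\big)^{\bullet}$, a closed subspace of ${\Scal}\big(\R^q,{\Acal}'(K_i,E^s)\big)$, so that the singular function in question splits as a finite sum over $i$.

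Apply Theorem \ref{reg} to each ${\hat\delta}_i$ with $K$ replaced by $K_i$: this yields a unique ${\hat\chi}_i:=B_i{\hat\delta}_i\in{\Acal}'(K_i,E^s)$, now without $y$-dependence, whose associated singular function coincides with that of ${\hat\delta}_i$. Since $\varphi_i(y)$ is $\eta$-independent it factors out of the quantization $F^{-1}_{\eta\to y}$, hence the singular function attached to $\varphi_i(y){\hat\delta}_i(y,\eta)$ equals $\varphi_i(y)$ times the singular function of ${\hat\chi}_i$, i.e. the one attached to $\varphi_i(y){\hat\chi}_i(\eta)$. Summing over $i$ and setting $\hat\chi(y,\eta):=\sum_{i=0}^N\varphi_i(y){\hat\chi}_i(\eta)$ --- which is \eqref{decd1} --- one obtains $f=F^{-1}_{\eta\to y}\{[\eta]^{(n+1)/2}\omega(r)\la\hat\chi(y,\eta)_z,(r[\eta])^{-z}\ra\}$. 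Thus, without altering the singular function, the $y$-wise discrete (branching) datum is replaced by a finite sum of pieces $f_i=\varphi_i(y)\,F^{-1}_{\eta\to y}\{[\eta]^{(n+1)/2}\omega(r)\la{\hat\chi}_i(\eta)_z,(r[\eta])^{-z}\ra\}$, the $i$-th being a \emph{constant}-in-$y$ continuous asymptotic singular function carried by $K_i$, cut off to $U_i$.

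It remains to count the Sobolev smoothness of each $f_i$. Since ${\hat\chi}_i\in{\Acal}'\big(K_i,{\hat H}^s\big(\R^q_\eta,C^{\y}(X)\big)\big)$ is $y$-independent, the reasoning of the constant discrete case \eqref{coef}--\eqref{regass} carries over after superposition: the entire $\eta$-dependence sits in the weight $[\eta]^{(n+1)/2-z}$ produced by $[\eta]^{(n+1)/2}(r[\eta])^{-z}$, and because $[\eta]^M{\hat H}^t={\hat H}^{t-M}$ this gives, for a single exponent $z$, a coefficient in $H^{s+\reb\,z-(n+1)/2-\varepsilon}\big(\R^q_y,C^{\y}(X)\big)$ for every $\varepsilon>0$; superposing over the carrier $K_i$ yields, exactly as in the derivation of \eqref{contreg}, the worst exponent $\underset{z\in K_i}{\inf}\big(s+\reb\,z-\varepsilon-(n+1)/2\big)$, which survives the harmless multiplication by $\varphi_i\in C_0^{\y}$. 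Since $K_i$ captures, for $y\in U_i$, precisely the poles of $\Pcal$ with $\reb\,z>c-\varepsilon_i$, this is the regularity of the $K_i$-contribution over $U_i$; refining the covering makes the finitely many overlapping values converge, and by Definition \ref{v.ast} both $U_i\ni y_0$ and the diameters of $U_i,K_i$ may be chosen arbitrarily small on enlarging $N$, which is the claimed localization.

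The only genuinely non-formal point is the last one, namely giving a precise meaning to ``coefficients'' and their Sobolev smoothness in the \emph{continuous} (as opposed to discrete) case and checking that the interaction of the weight $[\eta]^{(n+1)/2-z}$ with ${\Acal}'\big(K_i,{\hat H}^s\big(\R^q_\eta,C^{\y}(X)\big)\big)$ genuinely realizes the infimum over $\reb\,z$, $z\in K_i$. This is where one uses that the carrier is a \emph{compact} subset of a vertical strip, so that $\reb\,z$ runs over a compact interval and the bound is sharp up to the $\varepsilon$-loss coming from the unimodular and logarithmic factors; everything else --- the decomposition \eqref{decd}, the application of Theorem \ref{reg}, and the smallness of $U_i,K_i$ in Definition \ref{v.ast} --- is bookkeeping.
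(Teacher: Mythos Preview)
Your proposal is correct and follows essentially the same approach as the paper: the corollary in the paper is stated without a separate proof, being an immediate consequence of the preceding discussion (the decomposition \eqref{decd}, the application of the operators $B_i$ from Theorem \ref{reg} yielding \eqref{decd1}, and the regularity count \eqref{contreg}), which is precisely the chain of ingredients you assemble. Your argument is in fact slightly more explicit than the paper's, in that you justify why $\varphi_i(y)$ factors through the left-symbol quantization $F^{-1}_{\eta\to y}$ and hence why the singular functions attached to $\hat\delta$ and to $\hat\chi$ genuinely coincide, a point the paper states but does not spell out.
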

In other words, if we apply Theorem \ref{reg} to a $\hat{\delta}(y,\eta)\in{\Scal}\big(\R^q,{\Acal}'(K,E^s)\big)^{\bullet}$ with variable in $y$ and in general branching patterns of $y$-wise discrete asymptotics, then ``intuitively'' the Sobolev regularity at a point $y\in\R^q$ has the form \eqref{regass}, now for $p=p(y)$. Clearly the Sobolev smoothness in correct form refers to an open set in the $y$-variables. But Corollary \ref{Sobreg} tells us how to collapse such open sets to a single point, and then the Sobolev smoothness itself appears variable and branching under varying $y$.

Note that also the general continuous asymptotics carried by a compact set $K$ can be interpreted  in terms of decompositions into ``small'' parts $K_i$ when we write $K=\sum_{i=0}^NK_i$.  This allows us to read off the ``content'' of Sobolev regularity of singular functions as in Proposition \ref{aa} from the summands coming from $K_i$, and then we  have similar relations as in Corollary \ref{Sobreg}.

\end{document}